\newtheorem{assumption}{Assumption}[section]
\newtheorem{remark}{Remark}[section]
\let\csname algorithm*\endcsname\relax
\let\csname endalgorithm*\endcsname\relax
\title{Nonlinear parametrization solver for fractional Burgers equations}
\author{
Haojun Qin%
\thanks{Haojun Qin, Zhiwei Gao, and Jinye Shen contributed equally as co-first authors.}
\and
Zhiwei Gao\footnotemark[1]
\and
Jinye Shen\footnotemark[1]
\and
George E.~Karniadakis%
\thanks{Division of Applied Mathematics, Brown University.
Emails: \email{haojun\_qin@brown.edu} (H.~Qin), \email{zhiwei\_gao@brown.edu} (Z.~Gao),
\email{jinye\_shen@brown.edu} (J.~Shen), \email{george\_karniadakis@brown.edu} (G.~E.~Karniadakis).
Corresponding author: G.~E.~Karniadakis.}
}
\date{}
\begin{document}

\maketitle
\begin{abstract}
Fractional Burgers equations pose substantial challenges for classical numerical methods due to the combined effects of nonlocality and shock-forming nonlinear dynamics. In particular, linear approximation frameworks—such as spectral, finite-difference, or discontinuous Galerkin methods—often suffer from Gibbs-type oscillations or require carefully tuned stabilization mechanisms, whose effectiveness degrades in transport-dominated and long-time integration regimes.
In this work, we introduce a sequential-in-time nonlinear parametrization (STNP) for solving fractional Burgers equations, including models with a fractional Laplacian or with nonlocal nonlinear fluxes. The solution is represented by a nonlinear parametric ansatz, and the parameter evolution is obtained by projecting the governing dynamics onto the tangent space of the parameter manifold through a regularized least-squares formulation at each time step. This yields a well-posed and stable time-marching scheme that preserves causality and avoids global-in-time optimization.
We provide a theoretical analysis of the resulting projected dynamics, including a stability estimate and an {\it a posteriori error} bound that explicitly decomposes the total error into contributions from initial condition fitting, projection residuals, and discretization of fractional operators. Our analysis clarifies the stabilizing role of regularization and quantifies its interaction with the nonlocal discretization error.
Numerical experiments for both fractional Burgers models demonstrate that STNP achieves oscillation-free shock resolution and accurately captures long-time dynamics. The method consistently outperforms high-order spectral schemes augmented with spectral vanishing viscosity, while requiring significantly fewer degrees of freedom and avoiding ad hoc stabilization. 
\end{abstract}

\section{Introduction}
Fractional partial differential equations (FPDEs) have shown great potential in modeling nonlocal behavior of diverse systems, such as anomalous diffusion in porous media \cite{albinali2016analytical} and viscoelastic materials \cite{fukunaga2015fractional}. In particular, as a generalization of the integer-order Burgers equation, the spatial fractional Burgers equation  has attracted considerable interest due to its applications in the areas of over-driven detonation in gases, nonlinear Markov process propagation of chaos, etc. \cite{Shen2022,mao2017fractional}. 

Specifically, there are two major extensions of the classical Burgers equations. Firstly, the fractional Laplacian operator, i.e., $(-\Delta)^{\frac{\alpha}{2}}$, is used to replace the standard diffusion term, leading to
 \begin{equation}\label{FBEFL}
     u_t + \frac{1}{2}(u^2)_x = -\varepsilon (-\Delta)^{\frac{\alpha}{2}} u, \quad 1 < \alpha < 2,
 \end{equation}
 which is denoted as fractional Burgers equation with fractional Laplacian (FBEFL). Here, the nonlocal operator is associated with the linear term and thus serves as a super-diffusion term, smoothing out the solution. For the case of $\alpha >1$, the existence and uniqueness of a regular solution have been established
 in \cite{alibaud2007occurrence,kiselev2008blow} .
 Another extension, called the fractional Burgers equation with nonlocal nonlinearity (FBENN), is obtained by replacing the nonlinear term with the composition of two left Caputo derivatives, hence  representing a nonlocal generalization of the classic Burgers equation, i.e.,
 \begin{equation}\label{FBENN}
     u_t + \frac{1}{2} {}_a^C D^{\beta} ({}_a^C D^{1-\beta}u)^2 = \varepsilon u_{xx},
 \end{equation}
 where $0 \leq \beta \leq 1$. With different choices of $\beta$, \eqref{FBENN} can recover the classic Burgers equation or a nonlinear diffusion equation.  More importantly, by adopting a fractional generalization of the Hopf–Cole transformation, the solution can be obtained explicitly \cite{mivskinis2013generalization}, i.e.,  
\begin{equation}\label{exact solution}
    u(x,t) = -2\varepsilon \,{}_a^C D^\beta \left[  \log \left( \frac{1}{\sqrt{4\pi \varepsilon t}} \int_{\mathbb{R}} e^{- \frac{|x-y|^2}{4\varepsilon t} } e^{-\frac{1}{2\varepsilon} {}_a I^\beta u_0(y)} \,\mathrm{d}y \right) \right].
\end{equation}
The notation and some other properties will be introduced later in section \ref{section2}. 

For the classical Burgers equation, shock waves~\cite{leveque1992numerical} can develop even from smooth initial data, which in turn induces Gibbs phenomena~\cite{lax2006gibbs} and spurious oscillations in standard \emph{linear parametrization} schemes, such as spectral expansions, finite differences (FD), and discontinuous Galerkin (DG) discretizations. In the integer-order setting, these issues are commonly mitigated by local shock-capturing techniques, e.g., upwind fluxes~\cite{courant1952solution} and ENO/WENO reconstructions~\cite{shu1999high}, which effectively stabilize the underlying linear representations by adding localized dissipation or nonlinear reconstruction.

For spatial fractional Burgers equations, the difficulty of linear parametrizations becomes more pronounced due to the intrinsic nonlocality of fractional operators. The fractional Laplacian and Caputo-type derivatives introduce global coupling, so that the evolution of each linear degree of freedom is influenced by the solution over the entire domain. As a consequence, linear discretizations often lead to denseand globally coupled operators. Moreover,  local stabilization mechanisms do not directly transfer since  numerical perturbations generated near shocks can spread nonlocally, while adding dissipation to suppress oscillations may easily degrade accuracy.

Several approaches can be viewed as stabilized linear parametrizations for fractional models. Droniou~\cite{droniou2010numerical} analyzed a broad class of FD schemes within a rigorous convergence framework. Xu and Hesthaven~\cite{xu2014discontinuous} extended the LDG methodology to fractional convection--diffusion equations, maintaining a DG-type linear representation while incorporating the fractional operator. More recently, Mao and Karniadakis~\cite{mao2017fractional} introduced spectral vanishing viscosity (SVV) into spectral and spectral-element LDG approximations to control Gibbs-type oscillations while preserving high-order accuracy~\cite{maday1989analysis}. While successful, these stabilized linear parametrizations can still be costly in transport-dominated regimes. Resolving sharp layers typically requires many degrees of freedom, and the stabilization parameters often need careful tuning to balance oscillation control against shock smearing and loss of accuracy, especially over long-time integration.

Motivated by these limitations of linear parametrizations, we propose a sequential-in-time nonlinear parametrization (STNP) as an efficient solver for two fractional Burgers equations, FBEFL and FBENN. STNP represents the solution by a parametric ansatz \(\hat{u}(x,t)=\Phi(x;q(t))\) and evolves the parameters through a tangent-space projection of the PDE dynamics. At each time step, \(\dot q(t)\) is obtained from a regularized least-squares problem, followed by standard time integration. This yields a well-posed update and provides a stable mechanism to control error growth induced by discretizing the nonlocal terms. We further establish stability results and derive {\it a posteriori} error estimates for the resulting projected dynamics. Our numerical experiments demonstrate that STNP can accurately and efficiently solve both FBEFL and FBENN while resolving shocks without spurious oscillations.

From a numerical analysis perspective, STNP can be interpreted as a nonlinear Galerkin-type method, in which the solution evolves on a low-dimensional nonlinear manifold and the governing dynamics are enforced through a tangent-space projection. 
A key feature of the proposed framework is the introduction of a regularized tangent-space projection, which plays a dual role in stabilizing the projected dynamics and enabling a rigorous {\it a posteriori}  error analysis.
In summary, our new contributions in this paper are:
\begin{itemize} 
\item Introduction of STNP as a nonlinear solver (not a learning framework).
\item Stability bound for regularized tangent projection.
\item {\it A posteriori} error decomposition (initial fit / projection / discretization).
\item Numerical evidence on fractional Burgers with shocks.
\end{itemize}

The remainder of this paper is organized as follows. In Section~\ref{section2} we review the basic definitions of fractional derivatives and the numerical discretizations used in this work. In Section~\ref{section3} we present regularity results for FBEFL and FBENN. In Section~\ref{section4} we introduce the STNP algorithm and its implementation details. In Section~\ref{section5} we justify the necessity of regularization and derive stability and {\it a posteriori} error estimates. In Section~\ref{section6} we provide numerical experiments that validate the analysis and demonstrate the performance of the proposed method.

\section{Background}\label{section2}
In this section, we introduce the notations of fractional calculus, including the Riemann--Liouville derivative, the Caputo derivative, and the fractional Laplacian. We then describe the corresponding numerical discretizations.

\subsection{Fractional calculus}
First, the left Riemann--Liouville integral of order $\beta$ is defined as
\begin{equation}
    ({}_aI_x^{\beta} u)(x) = \int_{a}^x \omega_{\beta}(x-s)u(s)\,\mathrm{d}s,
\end{equation}
where $\omega_{\beta}(x) = \frac{x^{\beta-1}}{\Gamma(\beta)}$ for $\beta > 0$ and $\Gamma(x)$ denotes the standard Gamma function. The right Riemann--Liouville integral of order $\beta$ is defined as 
\begin{equation}
    ({}_xI_a^{\beta} u)(x) = \int_{x}^a \omega_{\beta}(s-x)u(s)\,\mathrm{d}s.
\end{equation}
And the left Caputo derivative of order $\alpha \in (0,1)$ is defined as, 
\begin{equation}
    ({}^C_aD_x^{\alpha} u)(x) = ({}_a I_x^{1-\alpha} u')(x) = \frac{1}{\Gamma(1-\alpha)}\int_a^x \frac{u'(s)}{(x-s)^{\alpha}}\,\mathrm{d}s. 
\end{equation}
Similarly, the left Riemann--Liouville derivative of order $\alpha \in (0,1)$ is defined as,
\begin{equation}
    ({}^{RL}_a D_x^{\alpha} u)(x) = ({}_a I_x^{1-\alpha} u)'(x) = \frac{1}{\Gamma(1-\alpha)} \frac{\mathrm{d}}{\mathrm{d}x} \int_{a}^x \frac{u(s)}{(x-s)^{\alpha}}\,\mathrm{d}s.
\end{equation}
And the right Riemann--Liouville derivative of order $\alpha \in (0,1)$ is 
\begin{equation}
    ({}^{RL}_x D_a^{\alpha} u)(x) = ({}_x I_a^{1-\alpha} u)'(x) = \frac{1}{\Gamma(1-\alpha)} \left(-\frac{\mathrm{d}}{\mathrm{d}x} \right)\int_{x}^a \frac{u(s)}{(s-x)^{\alpha}}\,\mathrm{d}s.
\end{equation}
Based on these definitions, we can define the fractional Laplaician as a sum of two Riemann--Liouvile derivatives, i.e., 

\begin{equation}\label{Riesz2}
    (-\Delta)^{\frac{\alpha} 2} u(x) = \frac{1}{2\cos\left( \frac{\alpha \pi}{2}\right)} \left( {}_{-\infty}^{RL}D_x^{\alpha} u(x) + {}_{x}^{RL}D_{+\infty}^{\alpha}u(x) \right).
\end{equation}
It is worth mentioning that the fractional Laplacian can be defined in several equivalent ways \cite{lischke2020fractional}. In this paper, we restrict ourselves to the definition based on Riemann--Liouville derivatives since it is more convenient to construct finite difference schemes.
When $\alpha = 2$, the fractional Laplace operator is consistent with the classic Laplace operator. To discretize the fractional Laplacian, it suffices to discretize the Riemann--Liouville derivative. In this work, we employ the shifted Gr\"unwald--Letnikov scheme for the discretization of the Riemann--Liouville derivative, which is introduced in the following subsection.

\subsection{Discretization of the fractional derivatives}
\label{section_2.2}
Since fractional derivatives are nonlocal and depend on values over the entire domain, they cannot be discretized using standard local finite-difference schemes. For the Caputo derivative ${}^C_aD^\alpha$ with $\alpha \in (0,1)$, defined on $[a,b]$, one of the most commonly used methods is the L1 scheme, which is based on piecewise linear interpolation \cite{sun2006fully}. For a fixed $N$, let $h = \frac{b-a}{N}$. The grid points are $x_k = a + k h$, for $k = 0,1,2,\ldots,N$. The Caputo derivative at $x_n$ is approximated by
\begin{equation}\label{L1}
    {}_a^C D^\alpha u(x)\big|_{x = x_n} \approx \frac{h^{-\alpha}}{\Gamma(2- \alpha)} \left[ a^{(\alpha)}_0 u(x_n) - \sum_{k=1}^{n-1}\big(a^{(\alpha)}_{n-k-1} - a^{(\alpha)}_{n-k}\big)u(x_k) - a^{(\alpha)}_{n-1}u(x_0)\right],
\end{equation}
with
\begin{equation}
    a^{(\alpha)}_{l} = (l+1)^{1-\alpha} - l^{1-\alpha}.
\end{equation}
The truncation error of the L1 scheme is $\mathcal{O}(h^{2 - \alpha})$ \cite{sun2006fully}.

For the fractional Laplacian, since it can be expressed as a sum of two Riemann--Liouville derivatives, we adopt the shifted Gr\"unwald--Letnikov formula to approximate it. First, define
\begin{equation}
    A^\alpha_{h,p} u(x) = h^{-\alpha} \sum_{k=0}^\infty g^{(\alpha)}_k \, u\!\big(x-(k-p)h\big),
\end{equation}
where $p$ denotes the shift, and
\begin{equation}
    g^{(\alpha)}_k = (-1)^k \binom{\alpha}{k}, \quad \binom{\alpha}{k} =  \frac{\alpha(\alpha-1)\cdots(\alpha-k+1)}{k!}.
\end{equation}
Then the first-order approximation \cite{tian2015class} is given by
\begin{equation}\label{GL1}
    {}_{-\infty}^{RL}D^{\alpha}_{x}u(x) = A^{\alpha}_{h,p}u(x) + \mathcal{O}(h).
\end{equation}
Similarly, for $p\neq q$, the second-order approximation is given by \cite{tian2015class}
\begin{equation}\label{GL2}
    {}_{-\infty}^{RL}D^{\alpha}_{x}u(x) = \lambda_1 A^{\alpha}_{h,p} u(x) + \lambda_2 A^\alpha_{h,q}u(x) + \mathcal{O}(h^2),
\end{equation}
where
\begin{equation}
    \lambda_1 = \frac{\alpha - 2q}{2(p-q)}, \qquad \lambda_2 = \frac{2p-\alpha}{2(p-q)}.
\end{equation}
For other higher-order schemes, we refer the reader to \cite{hao2015fourth,zhou2013quasi}. We emphasize that, in contrast to local finite-difference schemes, the above fractional discretizations lead to convolution-type operators. In practice, the infinite series in $A^\alpha_{h,p}$ is truncated on a finite grid, and the resulting discrete operator can be viewed as a dense matrix acting on the grid values of $u$. The weights $\{g_k^{(\alpha)}\}$ decay algebraically and can be generated efficiently via the recursion
$g_0^{(\alpha)}=1$ and $g_k^{(\alpha)}=\left(1-\frac{\alpha+1}{k}\right)g_{k-1}^{(\alpha)}$,
so that the discretization can be implemented without explicitly forming the dense matrix. This highlights the key nonlocality of fractional derivatives and motivates the use of shifted Gr\"unwald--Letnikov formulas in our numerical scheme.

\section{Fractional Burgers equations}
\label{section3}
In this part, we recall the definitions of FBEFL and FBENN and then discuss their theoretical properties, including regularity, asymptotic behavior, and connections with other equations.

\subsection{FBEFL}

The Burgers equation with fractional Laplacian is defined as follows,
\begin{equation}\label{FBEFL2}
    \begin{cases}
        u_t + \frac{1}{2}(u^2)_x = -\varepsilon (-\Delta)^{\frac{\alpha}{2}} u & \text{in} \;\mathbb{R} \times \mathbb{R}^+, \\
        u(x,0) = u_0(x) & \text{in} \;\mathbb{R},
    \end{cases}
\end{equation}
where $1 < \alpha < 2$ and $\varepsilon>0$ is the diffusion rate.  It has been proved that for the periodic initial data, the case $ 1 < \alpha \leq 2$ is subcritical, where the real analytic solutions exist globally. The proof is similar to the classic case
 $\alpha =2$. Here we cite the result in \cite{kiselev2008blow}:
\begin{theorem} \cite{kiselev2008blow}
Assume that $1< \alpha \leq 2$, and the initial data $u_0(x) \in H^s(\mathbb{S}^1)$, for $s >
 \max(\frac{3}{2} - 2\alpha,0)$. Then there exists a unique global solution $u(x,t)$ of (\ref{FBEFL2}) such that $u$
 belongs to $C([0,\infty),H^s(\mathbb{S}^1))$. Moreover, $u(x,t)$ is real analytic in $\mathbb{R}$ for $t > 0$.
\end{theorem}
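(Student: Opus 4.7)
The plan is to establish the theorem in three stages: local well-posedness in $H^s(\mathbb{S}^1)$, a global-in-time Lipschitz bound via a modulus-of-continuity argument, and propagation of real analyticity. I would first pass to the mild Duhamel formulation $u(t) = P_t u_0 - \frac{1}{2}\int_0^t P_{t-s}(u^2)_x\,ds$ with $P_t := e^{-\varepsilon t (-\Delta)^{\alpha/2}}$ and run a standard contraction mapping in $C([0,T];H^s)$ for sufficiently small $T$. The parabolic smoothing bound $\|P_t f\|_{H^{s+\sigma}} \lesssim t^{-\sigma/\alpha}\|f\|_{H^s}$ compensates the loss of one derivative from $(u^2)_x$, and the subcriticality $\alpha > 1$ ensures $1/\alpha < 1$ so the resulting time integral converges; a Sobolev product estimate controls $\|u^2\|_{H^s}\lesssim \|u\|_{H^s}^2$ when $s > 1/2$, and in the borderline range $s \in (\tfrac{3}{2}-2\alpha,\tfrac{1}{2}]$ the missing regularity is supplied by the semigroup smoothing itself.

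For global existence I would prove a uniform-in-time Lipschitz bound by the modulus-of-continuity method of Kiselev--Nazarov--Volberg, adapted from the quasi-geostrophic setting. The idea is to postulate a strictly concave modulus $\omega$ with $\omega(0^+)=0$ and $\omega'(0^+)<\infty$, and to rule out a first breakdown time $t_\ast$ with points $x,y$, $|x-y|=\xi$, at which $u(x,t_\ast) - u(y,t_\ast) = \omega(\xi)$. At such a configuration the nonlinear transport contribution to $\partial_t\bigl(u(x,t_\ast) - u(y,t_\ast)\bigr)$ is at most of order $\omega(\xi)\,\omega'(\xi)$, while the symmetric-integrand representation of $(-\Delta)^{\alpha/2}$ produces a strictly negative dissipative contribution of order $-c\,\varepsilon\,\omega(\xi)/\xi^\alpha$. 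For $\alpha > 1$ the dissipative term dominates at every scale $\xi$, contradicting $\partial_t\bigl(u(x,t_\ast)-u(y,t_\ast)\bigr)\ge 0$, so $\omega$ is preserved by the flow. A Kato--Ponce commutator estimate then upgrades the resulting uniform bound on $\|u_x\|_{L^\infty}$ to a Gronwall-type bound on $\|u(t)\|_{H^s}$, closing the continuation argument.

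For analyticity I would use the Gevrey-norm technique: set $F(t) := \|e^{\lambda t(-\Delta)^{1/2}} u(t)\|_{H^s}$ for a small parameter $\lambda > 0$, differentiate in time, and use that $e^{\lambda t(-\Delta)^{1/2}}$ commutes with the dissipation together with a Foias--Temam-type analytic product estimate. The fractional dissipation, still active thanks to $\alpha > 1$, can be made to absorb the nonlinear term for sufficiently small $\lambda$, producing a differential inequality that keeps $F$ bounded on each compact subinterval of $(0,\infty)$. Exponential decay of the Fourier coefficients of $u(\cdot,t)$ then follows, which is equivalent to real analyticity on $\mathbb{R}$ for every $t>0$.

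The main obstacle, and the heart of the original Kiselev--Nazarov--Shterenberg argument, is the a priori Lipschitz estimate: constructing an admissible modulus $\omega$ and verifying pointwise that the nonlocal dissipation strictly beats the nonlinear transport at every saturation scale is a delicate nonlocal computation that is genuinely sharp at $\alpha=1$. Once that pointwise bound is in hand, local well-posedness and the Gevrey step reduce to relatively standard nonlinear interpolation and commutator arguments.
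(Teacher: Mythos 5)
You should first note that the paper does not actually prove this statement: it is quoted from Kiselev--Nazarov--Shterenberg \cite{kiselev2008blow}, accompanied only by the remark that in the subcritical range $1<\alpha\le 2$ the argument is ``similar to the classic case $\alpha=2$,'' i.e.\ an $L^\infty$ maximum principle combined with standard energy/parabolic-smoothing estimates, which close precisely because for $\alpha>1$ the $L^\infty$ norm is subcritical with respect to the natural scaling $u_\lambda(x,t)=\lambda^{\alpha-1}u(\lambda x,\lambda^\alpha t)$. Your first stage (Duhamel formulation, contraction mapping with the smoothing bound $\|P_tf\|_{H^{s+\sigma}}\lesssim t^{-\sigma/\alpha}\|f\|_{H^s}$) and your third stage (Gevrey-norm propagation of analyticity) are consistent with that standard route and are acceptable as outlines, although the bilinear estimate in the low-regularity range $s\le 1/2$ is only asserted, not supplied.

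The genuine gap is in your second stage, which carries the entire global-existence claim. You import the modulus-of-continuity breakdown argument — the machinery actually needed at the critical exponent $\alpha=1$ — and then assert that at a saturation scale $\xi$ the dissipative contribution $-c\,\varepsilon\,\omega(\xi)/\xi^\alpha$ beats the transport contribution $\omega(\xi)\,\omega'(\xi)$ ``at every scale'' simply because $\alpha>1$. This is not true for a generic strictly concave modulus: after cancellation the required inequality is $\omega'(\xi)\lesssim \varepsilon\,\xi^{-\alpha}$ for all relevant $\xi$, and the lower bound $c\,\omega(\xi)/\xi^{\alpha}$ on the dissipation is itself not automatic but must be verified for the specific $\omega$ (it degenerates when $\omega$ is nearly linear on $[\xi,2\xi]$). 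In other words, the modulus has to be \emph{constructed} — e.g.\ a steep ramp of slope $L$ up to a small $\delta$ followed by a plateau above $2\|u\|_{L^\infty}$, with $L$, $\delta$ tied quantitatively to $\varepsilon$, $\alpha$ and the solution — and the feasibility of that construction (which does use $\alpha>1$) is exactly the content you have skipped. A second unaddressed point is the starting time: the data are only in $H^s(\mathbb{S}^1)$ with $s$ possibly below $1/2$, hence not Lipschitz and not even necessarily bounded, so no admissible modulus with $\omega'(0^+)<\infty$ dominates $u_0$; you must first invoke local existence, instantaneous smoothing and the $L^\infty$ maximum principle at some $t_0>0$, and make sure the modulus chosen there is strictly obeyed and preserved along the continuation argument (which also requires the solution to stay smooth up to any putative breakdown time). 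Once those repairs are made your plan can be completed, but as written the key inequality is asserted rather than proved; note also that for this subcritical theorem the simpler maximum-principle-plus-energy argument indicated by the paper (and by \cite{kiselev2008blow}) already suffices, without any modulus-of-continuity machinery.
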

For more general initial data $u_0 \in H^1(\mathbb{R})$, Biler \cite{biler1998fractal} has shown that for $\frac{3}{2} < \alpha \leq 2$, the solution still belongs to $H^1(\mathbb{R})$. The result in \cite{biler1998fractal} is as follows:
\begin{theorem}\cite{biler1998fractal}
    Assume that $\frac{3}{2} < \alpha \leq 2$, $T >0$ and the initial data $u_0 \in H^1(\mathbb{R})$, the solution of (\ref{FBEFL2}) is unique, i.e.,
    \begin{equation}
        u \in L^\infty((0,T);H^1(\mathbb{R})) \cap L^2((0,T); H^{1+\frac{\alpha}{2}}(\mathbb{R})),
    \end{equation}
    for each $T > 0$. And the solution decays so that 
    \begin{equation}
        \lim_{t\to \infty} \|u(\cdot,t)\|_{L^\infty} = 0.
    \end{equation}
\end{theorem}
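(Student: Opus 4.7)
The plan is to follow an energy-method approach tailored to fractional dissipation, in the spirit of Biler's original argument. First, I would test the equation against $u$ and integrate in space: the flux term $\tfrac{1}{2}(u^2)_x\cdot u = \tfrac{1}{3}(u^3)_x$ integrates to zero, while the fractional Laplacian contributes the nonnegative quadratic form $\varepsilon\|(-\Delta)^{\alpha/4}u\|_{L^2}^2$. This yields the $L^2$ contraction $\|u(\cdot,t)\|_{L^2}\leq \|u_0\|_{L^2}$ together with the dissipation budget $\int_0^\infty \|(-\Delta)^{\alpha/4}u\|_{L^2}^2\,dt \leq \tfrac{1}{2\varepsilon}\|u_0\|_{L^2}^2$. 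Next, I would differentiate the equation in $x$, test against $u_x$, and integrate; after integration by parts the nonlinearity collapses to $\tfrac{1}{2}\int u_x^3\,dx$, producing
\begin{equation*}
\tfrac{1}{2}\tfrac{d}{dt}\|u_x\|_{L^2}^2 + \varepsilon\|u\|_{\dot H^{1+\alpha/2}}^2 = -\tfrac{1}{2}\int u_x^3\,dx .
\end{equation*}

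The crux is absorbing this cubic term into the dissipation, and this is exactly where the threshold $\alpha>\tfrac{3}{2}$ enters. The one-dimensional Gagliardo--Nirenberg inequality gives $\|u_x\|_{L^\infty}\leq C\|u_x\|_{L^2}^{1-1/\alpha}\|u_x\|_{\dot H^{\alpha/2}}^{1/\alpha}$ whenever $\alpha>1$, so $|\int u_x^3\,dx|\leq C\|u_x\|_{L^2}^{3-1/\alpha}\|u_x\|_{\dot H^{\alpha/2}}^{1/\alpha}$. Young's inequality with conjugate exponents $(2\alpha,\tfrac{2\alpha}{2\alpha-1})$ absorbs the $\dot H^{\alpha/2}$-factor into half of the dissipation and leaves a residual of the form $C\|u_x\|_{L^2}^{p}$ with $p=\tfrac{2(3\alpha-1)}{2\alpha-1}$. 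A second Gagliardo--Nirenberg bound, combined with the $L^2$ contraction, gives the reverse inequality $\|u\|_{\dot H^{1+\alpha/2}}^2\geq C\|u_0\|_{L^2}^{-\alpha}\|u_x\|_{L^2}^{2+\alpha}$. An elementary computation shows that $p\leq 2+\alpha$ is equivalent to $\alpha\geq \tfrac{3}{2}$, so for $\alpha>\tfrac{3}{2}$ the dissipation strictly dominates the nonlinearity at large $\|u_x\|_{L^2}$. A Gr\"onwall-type comparison argument then yields an a priori bound in $L^\infty(0,T;H^1)$, and integrating the $\dot H^{\alpha/2}$ term in time upgrades to $u\in L^2(0,T;H^{1+\alpha/2})$.

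For uniqueness, writing $w=u_1-u_2$ and performing the $L^2$ energy estimate on $w$ leaves one nontrivial term $-\tfrac{1}{4}\int(u_1+u_2)_x\, w^2\,dx$; the Sobolev embedding $H^{1+\alpha/2}\hookrightarrow W^{1,\infty}$ in one dimension (valid for $\alpha>1$) together with the $L^2_t$-integrability from the previous step makes the factor $\|(u_1+u_2)_x\|_{L^\infty}$ integrable in time, so Gr\"onwall closes the argument. For the decay $\|u(\cdot,t)\|_{L^\infty}\to 0$, I would use the finiteness of $\int_0^\infty\|(-\Delta)^{\alpha/4}u\|_{L^2}^2\,dt$ to extract a sequence of times along which this norm vanishes, then perform a low--high frequency splitting: the uniform $H^1$ bound controls low frequencies while the vanishing dissipation controls high frequencies, yielding $\|u(\cdot,t)\|_{L^2}\to 0$ first along a subsequence and then, by monotonicity of $t\mapsto\|u(\cdot,t)\|_{L^2}$, at every $t\to\infty$; a final Gagliardo--Nirenberg interpolation between this decaying $L^2$ norm and the uniform $H^1$ bound produces the $L^\infty$ decay. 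I expect the main obstacle to be the last step: turning subsequential smallness of the dissipation into genuine pointwise-in-time $L^2$ decay requires careful use of the monotonicity together with the instantaneous smoothing of the fractional semigroup, and the interplay with the nonlinear transport must be tracked to avoid losing the smallness through the cubic term in the $H^1$ estimate.
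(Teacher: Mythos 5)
This theorem is not proved in the paper: it is quoted from Biler et al.\ \cite{biler1998fractal}, with only the remark that the proof is a standard parabolic energy estimate combined with Sobolev embedding. Your reconstruction of that energy argument is essentially correct and matches the standard route: the $L^2$ contraction with dissipation budget, the $H^1$ estimate in which the cubic term $\tfrac12\int u_x^3\,\mathrm{d}x$ is absorbed via the Gagliardo--Nirenberg bound $\|u_x\|_{L^\infty}\lesssim \|u_x\|_{L^2}^{1-1/\alpha}\|u_x\|_{\dot H^{\alpha/2}}^{1/\alpha}$, the exponent comparison $2(3\alpha-1)/(2\alpha-1)\le 2+\alpha \iff \alpha\ge \tfrac32$ that produces the threshold, and the uniqueness estimate using $H^{1+\alpha/2}\hookrightarrow W^{1,\infty}$ with $L^2_t$-integrability of that norm. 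All of this is sound.

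The genuine gap is in the decay step. You propose to first prove $\|u(\cdot,t)\|_{L^2}\to 0$ by combining subsequential vanishing of $\|u\|_{\dot H^{\alpha/2}}$ with a low--high frequency splitting, claiming the uniform $H^1$ bound ``controls low frequencies.'' It does not: a bound on $\int_{|\xi|\le R}|\hat u|^2$ is not smallness, and neither the $H^1$ bound nor smallness of $\|u\|_{\dot H^{\alpha/2}}$ says anything about the low-frequency mass (a profile concentrated near $\xi=0$ has arbitrarily small $\dot H^{\alpha/2}$ norm with $\|u\|_{L^2}$ fixed). So the asserted subsequential $L^2$ decay does not follow from the tools you list; establishing $L^2$ decay for $u_0\in H^1$ only would require a separate Duhamel or Fourier-splitting argument, which you do not supply. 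The standard (and simpler) route bypasses $L^2$ decay entirely: by the maximum principle for $(-\Delta)^{\alpha/2}$ (at a global maximum $(-\Delta)^{\alpha/2}u\ge 0$ and $uu_x=0$), $t\mapsto\|u(\cdot,t)\|_{L^\infty}$ is nonincreasing; since $\int_0^\infty\|u\|_{\dot H^{\alpha/2}}^2\,\mathrm{d}t<\infty$ there are times $t_n\to\infty$ with $\|u(t_n)\|_{\dot H^{\alpha/2}}\to 0$, and the interpolation $\|u\|_{L^\infty}\lesssim\|u\|_{L^2}^{1-1/\alpha}\|u\|_{\dot H^{\alpha/2}}^{1/\alpha}$ (valid for $\alpha>1$, with $\|u\|_{L^2}\le\|u_0\|_{L^2}$) gives $\|u(t_n)\|_{L^\infty}\to 0$ along that sequence; monotonicity then yields the full limit $\lim_{t\to\infty}\|u(\cdot,t)\|_{L^\infty}=0$. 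Replacing your $L^2$-decay detour by this monotonicity-plus-interpolation argument closes the proof.
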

The proof is the standard procedure of energy estimate in parabolic equations augmented with the Sobolev embedding.

\subsection{FBENN}
The fractional Burgers equation with nonlocal nonlinearity is defined as follows,
\begin{equation}
    \begin{cases}
        u_t + \frac{1}{2} {}^C_a D^{\beta}({}^C_a D^{1-\beta}u)^2 = \varepsilon u_{xx} & \text{in} \;\mathbb{R} \times \mathbb{R}^+, \\
        u(x,0) = u_0(x) & \text{in} \;\mathbb{R},
    \end{cases}
    \label{FBENN3}
\end{equation}
where $0 \leq \beta \leq 1$. 
Here ${}^C_a D^\alpha$ denotes the Caputo fractional derivative of order $\alpha$ with respect to the spatial variable $x$, and the parameter $\beta\in[0,1]$ interpolates between local and nonlocal convection mechanisms. In particular, the nonlinear flux is constructed through the fractional integral--derivative pair ${}^C_aD^\beta({}^C_aD^{1-\beta}u)^2$, where ${}^C_aD^{1-\beta}$ introduces a nonlocal averaging of $u$ before squaring, and ${}^C_aD^\beta$ further contributes a nonlocal differentiation to the resulting flux. The limiting cases recover familiar models: when $\beta=1$, the convective term reduces to the classical Burgers nonlinearity $\frac12 (u^2)_x$, whereas when $\beta=0$ it becomes $\frac12(u_x)^2$, corresponding to a Hamilton--Jacobi–type nonlinearity. Throughout, $\varepsilon>0$ is the viscosity coefficient, and the diffusion term $\varepsilon u_{xx}$ provides regularization and ensures well-posedness of the initial-value problem.

For the two Burgers-type equations considered above, directly employing classical solvers based on linear function parameterizations—such as spectral methods or discontinuous Galerkin (DG) methods—can be nontrivial. 
From the perspective of solution regularity, Burgers dynamics may develop steep gradients and shock-like transition layers, under which high-order polynomial approximations are susceptible to Gibbs-type oscillations unless careful stabilization and limiting procedures are incorporated. 
Moreover, the non-locality introduced by the fractional terms further complicates the discretization and increases the computational cost.
In view of these challenges, we adopt a nonlinear parameterization—specifically, neural networks—to approximate the solution, which will be introduced in the following section.
While the framework applies to general nonlinear parametrizations, including tensor decompositions \cite{oseledets2011tensor} and radial basis expansions \cite{wendland2004scattered}, we focus on neural networks due to their expressive power and ease of differentiation.

\section{Methodology}\label{section4}
In this section, we first introduce the fundamentals of nonlinear parametric approximation. We then employ a sequential-in-time technique to derive the time evolution of the parameters, and finally describe how the resulting framework is applied to the two Burgers-type equations introduced above.
\subsection{Nonlinear parametric approximation}
In this part, we consider the parametrizations that depend nonlinearly on the parameters. Let $q$ denote the parameters, and $\Phi(\cdot;q)$ denote the parametric approximation. Let $\Phi'(\cdot;q)$ be the derivative of $\Phi$ with respect to $q$;  $\Phi$ being  nonlinear is equivalent to $\Phi''(\cdot;q) \neq 0$. Examples of nonlinear approximations are deep neural network \cite{goodfellow2016deep}, tensor-based decompositions \cite{orus2014practical, oseledets2011tensor} and Multi-Gaussian approximations \cite{richings2015quantum}. 

We now describe how nonlinear parameterizations can be used as a numerical solver for time-dependent PDEs. 
The key idea is to represent the solution by a parametric ansatz $\hat{u}(x,t)=\Phi(x;q(t))$, and to evolve the parameters $q(t)$ in time so that the ansatz approximately satisfies the governing equation. 
Instead of minimizing a global-in-time objective over the entire space--time domain, we enforce the PDE locally in time: at each time level, we choose an update of $q(t)$ that best matches the PDE right-hand side in a least-squares sense. 
This results in a sequential procedure in which one repeatedly solves a small regression problem for $\dot q(t)$  and then advances $q$ using a standard time integrator.

\subsection{Sequential-in-time training}
Compared with the global-in-time training used in Physics-Informed Neural Networks (PINNs) and their variants \cite{pang2019fpinns, raissi2019physics}, we will discuss the sequential-in-time training method in this section, which treats time variables separately in order to preserve causality and reduce the number of parameters \cite{koch2007dynamical,bruna2024neural}.

Consider the following time-dependent partial differential equation,
\begin{equation}
    \frac{\partial }{\partial t}u(x,t) = f(t,x,u,Du), \quad \text{with} \; u(x,0) = u_0(x), 
\end{equation}
where $x \in \Omega \subset \mathbb{R}$, $u:\mathbb{R} \times \mathbb{R}^+ \to \mathbb{R}$ is a scalar function on both time and space, and $f$ is the differential operator which depends on $t$, $x$, $u$ and more generally, any type of fractional derivative of $u$. Then, we seek a nonlinear parametric approximation 
\begin{equation}
    u(x,t) \approx \hat{u}(x,t)= \Phi(x;q(t)),
\end{equation}
where $q(t)\in \mathcal{Q} \subseteq \mathbb{R}^{N_P}$ represents the time-dependent parameters. 

Now, we start to introduce the formulation of sequential-in-time training. 
By the chain rule, the time evolution of the numerical solution is defined by
\begin{equation}
    \frac{\partial \hat{u}}{\partial t} = \frac{\partial \hat{u}}{\partial q} \dot{q}.
\end{equation}
At each time step, the time derivative of the parameters, is obtained by solving the following least-squares problem,
\begin{equation}
    \frac{\partial q}{\partial t} = \operatorname{argmin}_{\gamma \in \mathcal{Q}} \mathcal{J}(\gamma),
\end{equation}
where the cost function is 
\begin{equation}\label{Leastsquare}
    \mathcal{J}(\gamma) = \left\|\frac{\partial \hat{u}}{\partial q}\gamma - f(t,\cdot,\hat{u},D\hat{u}) \right\|_{L^2}^2.
\end{equation}
The first-order optimality condition is 
\begin{equation}
    \nabla_\gamma \mathcal{J}(\gamma^*) = \left(\int_\Omega  \frac{\partial \hat{u}}{\partial q} \otimes \frac{\partial \hat{u}}{\partial q}   \,\mathrm{d}x \right) \gamma^* - \left(  \int_{\Omega} \frac{\partial \hat{u}}{\partial q} \otimes  f(t,x,\hat{u},D\hat{u}) \,\mathrm{d}x \right) = 0,
\end{equation}
where $\otimes$ denotes the outer product of the gradient with respect to the parameters. In practice, we choose some collocation points $\{x_1,x_2,...,x_{N_C}\} \subseteq \Omega$. These  collocation points induce a discrete least-squares approximation of the continuous least-squares problem \eqref{Leastsquare}. Note that by using a sufficiently large number of collocation points, the discrete approximation can be made arbitrarily accurate. This observation will be further confirmed by the numerical experiments presented in Section~\ref{section6}. 
We now return to the construction of the discrete least-squares system. On the chosen collocation points, the optimal solution $\gamma^*$ is approximated by solving
\begin{equation}\label{noregularization}
    J^T J \hat{\gamma}^* = J^T \vec{f},
\end{equation}
where $J$ is the gradient matrix, and $\vec{f}$ is the vector field evaluated at those points, i.e.,  
\begin{equation}\label{matrixandvector}
    (J)_{ij} = \frac{\partial \hat{u}^i}{\partial q_j}, \quad (\vec{f})_i = f(t,x_i,\hat{u}^i, D\hat{u}^i),
\end{equation}
and $i \in [N_C]$ is the index of the collocation points, while $j \in [N_P]$ is the index of the parameters.

Generally, the gradient matrix 
$J$ may have arbitrarily small singular values, which can lead to numerical stability issues and even multiple solutions for the LS problem \eqref{noregularization}. Therefore, it is necessary to introduce an additional regularization term to construct a well-posed problem, i.e., 
\begin{equation}\label{wLeastsquare}
    \mathcal{J}(\gamma) = \left\|\frac{\partial \hat{u}}{\partial q}\gamma - f(t,\cdot,\hat{u},D\hat{u}) \right\|_{L^2}^2 + \lambda^2 \|\gamma\|_{\mathcal{Q}}^2,
\end{equation}
where $\lambda$ is the regularization parameter and $\|\cdot\|_{\mathcal{Q}}$ denotes the norm on $\mathcal{Q}$, which is chosen as the standard $\ell^2$ inner product on $\mathbb{R}^{N_P}$. Then, 
the corresponding discretized version of the first-order optimality condition is 
\begin{equation}\label{normalequation}
    (J^TJ + \lambda^2 I) \gamma^* = J^T \vec{f},
\end{equation}
where $I$ denotes the identity matrix. Both direct inversion methods and iterative methods can be applied to solve \eqref{normalequation}. From a geometric perspective, this procedure can be interpreted as projecting the vector field onto the tangent space of the parametric manifold with respect to a mixed inner product induced by $L^2$ norm and regularization. Beyond its numerical stabilization effect, the regularization also plays a fundamental role in the theoretical analysis. This aspect will be discussed in detail in Section~\ref{section5}. Having outlined the formulation and interpretation of the sequential-in-time training framework, we will complete the description of the algorithm by specifying the treatment of the initial condition, boundary conditions,   and the time integration strategy.

The initial condition is satisfied by solving an optimization problem or directly by interpolation. In this paper, we adopt the deep neural network as the nonlinear parametrization. The initial parameter $q_{0}$ is obtained by optimizing the following loss function, i.e.,
\begin{equation}\label{Loss}
    q_{0} =  \arg\min_{q\in \mathcal{Q}}\sum_{i=1}^{N_u}\left|\hat{u}(x_i) - u_0(x_i)\right|^2,
\end{equation}
using some optimizers such as  Adam \cite{kingma2014adam} and LBFGS \cite{liu1989limited}. 

The treatment of boundary conditions follows standard practices in the neural network–based approach, and we refer the reader to existing literature for detailed implementations. 
For periodic boundary conditions, periodicity is enforced by mapping the input coordinates to trigonometric features, such as $\sin(x)$ and $\cos(x)$, which guarantees periodicity by construction; see, e.g., \cite{yazdani2020systems}. 
For Dirichlet boundary conditions, we adopt commonly used approaches based on multiplying the network output by a distance function to the boundary or incorporating suitable interpolation polynomials, so that the prescribed boundary values are satisfied exactly, see \cite{berg2018unified, luo2024two}. 
Subsequently,  different time integration schemes can be employed to perform the time marching. In this paper, we mainly adopt the explicit Runge-Kutta method, e.g.,  RK3 and RK45 schemes.

For the two fractional Burgers equations considered in this paper, the above procedure is applied by specifying the operator $f$ and assembling the vector $\vec f$ in~\eqref{matrixandvector} accordingly. 
Concretely, at each Runge--Kutta stage and each collocation point $x_i$, we evaluate
$f(t,x_i,\hat u^i,D\hat u^i)$ by combining (i) the integer-order derivatives (e.g., $u_x,u_{xx}$), obtained from the parametric surrogate $\hat u=\Phi(\cdot;q)$ via automatic differentiation, and (ii) the fractional/nonlocal operators, evaluated numerically using the finite difference schemes introduced earlier in this paper.

In detail, for the fractional Laplacian terms appearing in the fractional Burgers equation, we use the shifted Grünwald--Letnikov approximation described in Section~\ref{section_2.2} (cf.~\eqref{GL1}--\eqref{GL2}). 
For the nonlocal nonlinearity in \eqref{FBENN3}, which involves the composition ${}^C_a D^{\beta}\bigl({}^C_a D^{1-\beta}u\bigr)^2$, we compute the inner Caputo operator ${}^C_a D^{1-\beta}u$ and then apply the outer Caputo operator ${}^C_a D^{\beta}$ to the squared result, using the same Caputo discretization scheme introduced earlier (cf.~\eqref{L1}). 
In this way, the nonlocal contributions are incorporated into $\vec f$ while the overall sequential-in-time least-squares framework~\eqref{normalequation}--\eqref{Loss} remains unchanged.

\subsection{Algorithm}
In this part, based on the nonlinear parametric formulation and different discretization schemes we have introduced, we present the detailed steps of our algorithm for the fractional Burgers equations; see \ref{alg:nonlinear_parametric}.

\begin{algorithm}[H]
  \caption{Nonlinear‐Parametric Solver for Fractional Burgers Equations}
  \label{alg:nonlinear_parametric}
  \KwIn{Initial condition \(u_{0}(x)\), collocation points \(\{x_{i}\}_{i=1}^{N_u}\), initial time step \(\Delta t\), final time \(T\), regularization parameter \(\lambda\).}
  \KwOut{Approximate solution \(\hat{u}(x,t)\) via parameter set \(q(t)\).}

  Initialize the parametrization $q(0) \in \mathcal{Q}$ by interpolation or minimizing (\ref{Loss}). \;

  \While{\(t < T\)}{
    Compute spatial fractional derivatives by (\ref{L1}), (\ref{GL1}), (\ref{GL2}) and the integer-order derivatives by automatic differentiation. \;
    
    Construct the gradient matrix $J$ and vector field $\vec{f}$ by (\ref{matrixandvector}). \;
    
    Solve the regularized least-squares problem (\ref{normalequation}).\;

    Perform time marching by Runge-Kutta method, and get the new time step $\Delta t$. \;

    $t \leftarrow t + \Delta t$.\;
  }
  \Return \(\hat{u}(x,t)\) represented by \(\Phi\bigl(x;\,q(t)\bigr)\). \;
\end{algorithm}

\section{Theoretical analysis}\label{section5}
In this section, we analyze the stability of our nonlinear parametric solver and derive an {\it a posteriori}  error estimate by tracking the defect of each least-squares subproblem. Finally, we examine how the order of the finite-difference approximation for fractional derivatives affects the results. We will first introduce some basic notations that will be used in our analysis.

Since the vector field \(f\) contains fractional derivatives that we approximate by a finite-difference scheme, we introduce the discrete vector field \(f^{h}\), where \(h\) denotes the grid size. The corresponding truncation error is
\begin{equation}\label{finite difference error}
R^{h}(t,x)= f(t,x,\hat u,D\hat u)-f^{h}(t,x,\hat u,D\hat u).
\end{equation}
Our goal is to quantify the cumulative effect of this discretization error and the approximation error induced by restricting the dynamics to a parametric manifold. To this end, let
\[
\mathcal{M}=\{\Phi(\cdot;q)\mid q\in\mathcal{Q}\}
\]
be the manifold induced by the parametrization, and let \(T_q\mathcal{M}\) denote its tangent space at \(q\).
At each time \(t\), we approximate the vector field \(f^{h}(t,\cdot,\hat u,D\hat u)\) by an element of \(T_{q(t)}\mathcal{M}\) in a regularized \(L^{2}\) sense.
Specifically, we equip pairs \((u,q)\in L^{2}\times\mathcal{Q}\) with the mixed inner product
\begin{equation}\label{mixed inner product}
((u_1,q_1),(u_2,q_2)) := (u_1,u_2)_{L^{2}}+\lambda^{2}(q_1,q_2)_{\mathcal{Q}},
\end{equation}
where \(\lambda>0\) is a regularization parameter.
The induced projection \(P_{q}^{\lambda}\) onto \(T_q\mathcal{M}\) is realized by the following Tikhonov-regularized least-squares problem:
\begin{equation}\label{least-squares problem 5}
\dot q(t)=\arg\min_{\gamma\in\mathcal{Q}}
\left\|\frac{\partial \hat u}{\partial q}(t)\,\gamma - f^{h}(t,\cdot,\hat u,D\hat u)\right\|_{L^{2}}^{2}
+\lambda^{2}\|\gamma\|_{\mathcal{Q}}^{2}.
\end{equation}
With this choice of \(\dot q(t)\), the projected vector field is
\begin{equation}\label{projector}
P_{q(t)}^{\lambda} f^{h}(t,x,\hat u,D\hat u)
:= \frac{\partial \hat u}{\partial q}(t,x)\,\dot q(t)\in T_{q(t)}\mathcal{M}.
\end{equation}
The mismatch between the discrete vector field and its tangent space approximation is measured by the  defect function
\begin{equation}\label{defect function}
d^{\lambda,h}(t,x)
:= f^{h}(t,x,\hat u,D\hat u)-P_{q(t)}^{\lambda} f^{h}(t,x,\hat u,D\hat u),
\end{equation}
and we define the corresponding least-squares residual as
\begin{equation}\label{delta_function}
\delta^{\lambda,h}(t)
:= \left(\left\|\frac{\partial \hat u}{\partial q}(t)\,\dot q(t)- f^{h}(t,\cdot,\hat u,D\hat u)\right\|_{L^{2}}^{2}
+\lambda^{2}\|\dot q(t)\|_{\mathcal{Q}}^{2}\right)^{\frac{1}{2}}.
\end{equation}
Since \(\delta^{\lambda,h}(t)\) contains the squared \(L^{2}\)-norm of the defect, we immediately obtain
\begin{equation}\label{residual and defect}
\|d^{\lambda,h}(t,\cdot)\|_{L^{2}}\le \delta^{\lambda,h}(t).
\end{equation}
Therefore, tracking \(\delta^{\lambda,h}(t)\) provides a computable {\it a posteriori}  indicator of how well the parametric dynamics follows the discretized vector field \(f^{h}\).

Finally, by construction, the numerical solution evolves along the manifold according to the projected dynamics
\begin{equation}\label{evolution equation}
\hat u_t(t,x)
= P_{q(t)}^{\lambda} f^{h}(t,x,\hat u,D\hat u)
= f^{h}(t,x,\hat u,D\hat u)-d^{\lambda,h}(t,x).
\end{equation}
The dynamics \eqref{evolution equation} is fully determined once the projection operator \(P_q^\lambda\) is specified.
In practice, however, the Jacobian \(\partial \hat u/\partial q\) may be poorly conditioned, making the unregularized \(L^2\) projection sensitive to discretization and modeling errors.
To address this issue, we introduce the regularized projection \(P_q^\lambda\) and analyze its stabilizing effect.
We formalize this point in the next subsection by comparing the \(L^2\) projection  with the regularized \(L^2\) projection.

\subsection{Regularization}
In this section, we compute the operator norm of the regularized projection and
clarify the role of regularization in the stability of the proposed algorithm.

At each time step, we solve the least-squares problem
\eqref{least-squares problem 5}, whose solution can be written as
\begin{equation}
    \label{q solution}
    \dot{q}
    =
    \left(
        \left( \frac{\partial \hat{u}}{\partial q} \right)^*
        \left( \frac{\partial \hat{u}}{\partial q} \right)
        + \lambda^2 I
    \right)^{-1}
    \left( \frac{\partial \hat{u}}{\partial q} \right)^*
    f^h,
\end{equation}
where $\frac{\partial \hat{u}}{\partial q} : \mathcal Q \to L^2$ denotes the
Jacobian of the parametrization, and
$\left(\frac{\partial \hat{u}}{\partial q}\right)^*$ is its $L^2$-adjoint.
The corresponding projected vector field is given by
\begin{equation}
    P^{\lambda}_{q} f^h
    =
    \left( \frac{\partial \hat{u}}{\partial q} \right)
    \left(
        \left( \frac{\partial \hat{u}}{\partial q} \right)^*
        \left( \frac{\partial \hat{u}}{\partial q} \right)
        + \lambda^2 I
    \right)^{-1}
    \left( \frac{\partial \hat{u}}{\partial q} \right)^*
    f^h .
\end{equation}

Let $\{\sigma_i\}_{i=1}^{N_P}$ denote the singular values of
$\frac{\partial \hat{u}}{\partial q}$, equivalently, the square roots of the
nonzero eigenvalues of
$\left( \frac{\partial \hat{u}}{\partial q} \right)^*
 \left( \frac{\partial \hat{u}}{\partial q} \right)$.
We further denote by $\sigma_{\min}$ and $\sigma_{\max}$ the smallest and largest
singular values, respectively.
Then the spectrum of the regularized projection satisfies
\begin{equation}
    \operatorname{spec}(P^\lambda_q)
    =
    \left\{
        \frac{\sigma_i^2}{\sigma_i^2 + \lambda^2},
        \quad i = 1, \dots, N_P
    \right\},
\end{equation}
and its induced operator norm on $L^2$ admits the bound
\begin{equation}
    \label{norm of projection}
    \|P^\lambda_q\|
    \le
    \frac{\sigma_{\max}^2}{\sigma_{\max}^2 + \lambda^2}
    < 1 .
\end{equation}
Therefore, the regularized projection operator $P^\lambda_q$ is a contraction on
$L^2$, a property that will be used in the stability estimate, i.e., Theorem
\ref{stability estimate}.

Although the regularized projection $P_q^\lambda$ has favorable contraction
property in $L^2$, this does not preclude strong sensitivity in the parameter evolution, which we now quantify.
As discussed in Section \ref{wLeastsquare}, the least-squares subproblem may be severely ill-conditioned, corresponding to $\sigma_{\min} \approx 0$.
The induced operator norm in \eqref{q solution} from $L^2$ to the parameter space $\mathcal Q$ satisfies
\begin{equation}
\left\|
\left(
    \left( \frac{\partial \hat{u}}{\partial q} \right)^*
    \left( \frac{\partial \hat{u}}{\partial q} \right)
    + \lambda^2 I
\right)^{-1}
\left( \frac{\partial \hat{u}}{\partial q} \right)^*
\right\|
=
\max_{1 \le i \le N_P}
\frac{\sigma_i}{\sigma_i^2 + \lambda^2}
\le
\frac{1}{2\lambda}.
\end{equation}
In contrast, in the unregularized case $\lambda = 0$, the corresponding operator
satisfies
\begin{equation}
\left\|
\left(
    \left( \frac{\partial \hat{u}}{\partial q} \right)^*
    \left( \frac{\partial \hat{u}}{\partial q} \right)
\right)^{-1}
\left( \frac{\partial \hat{u}}{\partial q} \right)^*
\right\|
=
\frac{1}{\sigma_{\min}},
\end{equation}
which becomes unbounded as $\sigma_{\min} \to 0$.

As a consequence, any perturbation in the vector field—most notably the
discretization error arising from finite-difference approximations of fractional
derivatives—leads to
\begin{equation}
\|\delta \dot q\|
\lesssim
\frac{1}{\lambda}\,\|\delta f^h\|,
\end{equation}
whereas without regularization the amplification factor scales like
$1/\sigma_{\min}$.
Therefore, ill-conditioning does not affect the $L^2$-operator norm of the
regularized projection $P_q^\lambda$, but instead enters through the mapping
$f^h \mapsto \dot q$, making the parameter evolution highly sensitive to small
perturbations in the vector field.
The regularization stabilizes the algorithm by uniformly controlling this
sensitivity and preventing discretization errors from being strongly amplified in
the parameter dynamics.

\subsection{Stability}
The \( L^2 \) energy of the solution of FBEFL will monotonously decay with the homogeneous Dirichlet boundary condition due to the dissipation effect of \( (-\Delta)^{\frac{\alpha}{2}} \). Our numerical solution will nearly obey the dissipation law. However, since the projection is based on the regularized \( L^2 \) projection instead of \( L^2 \) projection, this will introduce some energy drift effect depending on \( h \) and \( \lambda \).

\begin{theorem}[$L_{2}$ stability estimate]
\label{stability estimate}
    Let $\hat{u}(x,t) = \Phi(x;q(t))$ be the parametric numerical solution of FBEFL \eqref{FBEFL2}. If $\hat{u}(x,t) \in H^{\alpha/2}_0$, then 
    \begin{equation*}
        \|\hat{u}(\cdot,t)\|_{L^2} \leq e^{-\lambda^*_\alpha \varepsilon t}\|\hat{u}(\cdot,0)\|_{L^2} + \int_0^t e^{-\lambda^*_\alpha \varepsilon (t-s)} \left( \delta^{\lambda,h}(s) +\|R^h(s,\cdot)\|_{L^2} \right) \,\mathrm{d}s, 
        \end{equation*}
    where $\lambda^*_\alpha > 0$ is the smallest eigenvalue of $(-\Delta)^{\frac{\alpha}{2}}$ with homogeneous Dirichlet boundary conditions.
    \end{theorem}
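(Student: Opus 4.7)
The plan is to derive a differential inequality for $\|\hat{u}(\cdot,t)\|_{L^2}$ by $L^2$-energy testing the projected evolution equation \eqref{evolution equation} against $\hat{u}$ itself, and then to close the argument with Grönwall's lemma. Using $f^h = f - R^h$ with $f(t,x,\hat u,D\hat u)=-\tfrac{1}{2}(\hat u^2)_x - \varepsilon(-\Delta)^{\alpha/2}\hat u$ for FBEFL, I would rewrite \eqref{evolution equation} as
\begin{equation*}
\hat{u}_t = -\tfrac{1}{2}(\hat u^2)_x - \varepsilon(-\Delta)^{\alpha/2}\hat u - R^h - d^{\lambda,h},
\end{equation*}
so that $\tfrac12\tfrac{d}{dt}\|\hat u\|_{L^2}^2 = (\hat u_t,\hat u)_{L^2}$ splits into four pieces that I handle separately.

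The key simplifications are the following. First, the Burgers nonlinearity drops out because
$\int (\tfrac12(\hat u^2)_x)\,\hat u\,dx = \tfrac{1}{3}\int\partial_x(\hat u^3)\,dx = 0$,
using the homogeneous Dirichlet trace built into $H^{\alpha/2}_0$ together with the embedding $H^{\alpha/2}_0\hookrightarrow L^3$ in one spatial dimension (valid since $\alpha>1$). Second, the fractional Laplacian supplies dissipation via the spectral coercivity
$((-\Delta)^{\alpha/2}\hat u,\hat u)_{L^2} = \|(-\Delta)^{\alpha/4}\hat u\|_{L^2}^2 \ge \lambda_\alpha^*\|\hat u\|_{L^2}^2$,
which follows by expanding $\hat u$ in the orthonormal Dirichlet eigenbasis of $(-\Delta)^{\alpha/2}$ and using the Rayleigh quotient characterization of $\lambda_\alpha^*$. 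Third, Cauchy--Schwarz combined with the computable residual bound \eqref{residual and defect} yields
$|(R^h,\hat u)_{L^2}|\le \|R^h\|_{L^2}\|\hat u\|_{L^2}$ and $|(d^{\lambda,h},\hat u)_{L^2}|\le \delta^{\lambda,h}(t)\|\hat u\|_{L^2}$.

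Putting everything together, I obtain
\begin{equation*}
\tfrac{1}{2}\tfrac{d}{dt}\|\hat u\|_{L^2}^2 \le -\varepsilon\lambda_\alpha^*\|\hat u\|_{L^2}^2 + \bigl(\delta^{\lambda,h}(t)+\|R^h(t,\cdot)\|_{L^2}\bigr)\|\hat u\|_{L^2},
\end{equation*}
and after dividing by $\|\hat u\|_{L^2}$ (justified by the usual $\sqrt{\|\hat u\|_{L^2}^2+\epsilon}$ regularization and $\epsilon\downarrow 0$ argument to handle the possible zero set) I reach the scalar ODE inequality $\tfrac{d}{dt}\|\hat u\|_{L^2} \le -\varepsilon\lambda_\alpha^*\|\hat u\|_{L^2} + \delta^{\lambda,h}(t)+\|R^h(t,\cdot)\|_{L^2}$. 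Multiplying by the integrating factor $e^{\varepsilon\lambda_\alpha^* t}$ and integrating on $[0,t]$ delivers the stated bound.

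The main obstacle is the rigorous justification of the cubic integration-by-parts in the nonlinear term and, correspondingly, the precise role of the assumption $\hat u\in H^{\alpha/2}_0$: one must verify that $\hat u^3$ has a well-defined distributional derivative pairing with $1$ and that the boundary contribution vanishes. This is where the $H^{\alpha/2}_0$ hypothesis, rather than just $H^{\alpha/2}$, is essential, and an approximation of $\hat u$ by smooth functions with compact support combined with the Sobolev embedding into $L^p$ for $p<\infty$ in one dimension gives the cleanest argument. The remaining ingredients—spectral coercivity and Grönwall—are standard and present no real difficulty.
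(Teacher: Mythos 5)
Your proposal is correct and follows essentially the same argument as the paper: test the projected dynamics \eqref{evolution equation} against $\hat u$, kill the Burgers term by the cubic integration by parts under homogeneous Dirichlet data, use the spectral coercivity \eqref{spectral property} for the fractional dissipation, bound the defect and truncation terms via Cauchy--Schwarz and \eqref{residual and defect}, and close with Gr\"onwall. The only cosmetic difference is that you absorb the truncation error through the decomposition $\hat u_t = f - R^h - d^{\lambda,h}$ directly, whereas the paper routes it through $\langle \hat u, P^\lambda_{q(t)} R^h\rangle$ and the contraction bound \eqref{norm of projection}; both yield the same estimate.
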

    \begin{proof}
    In FBEFL, let 
    \begin{equation}
        f(\hat{u},D\hat{u}) = -\varepsilon(-\Delta)^{\frac{\alpha}{2}}\hat{u} - \hat{u}\hat{u}_x
    \end{equation}
    be the corresponding vector field.
    By the evolution equation \eqref{evolution equation}, we have
    \begin{align}
\frac{\mathrm{d}}{\mathrm{d}t}\frac12\|\hat{u}(\cdot,t)\|_{L^2}^2
&= \langle \hat{u}(\cdot,t), \partial_t \hat{u}(\cdot,t) \rangle \nonumber\\
&= \langle \hat{u}, P^\lambda_{q(t)} f^h \rangle \nonumber\\
&= \langle \hat{u}, P^\lambda_{q(t)}(f - R^h) \rangle \nonumber\\
&= \big\langle \hat{u}, -\varepsilon(-\Delta)^{\frac{\alpha}{2}}\hat{u}
        - \hat{u}\hat{u}_x - d^{\lambda,h} \big\rangle
   - \langle \hat{u}, P^\lambda_{q(t)} R^h \rangle .
\label{eq:energy-identity}
\end{align}
Using \eqref{norm of projection}, and noting that
\(\langle \hat{u},\hat{u}\hat{u}_x\rangle=0\) under homogeneous Dirichlet boundary conditions, we obtain
\begin{align}
\frac{\mathrm{d}}{\mathrm{d}t}\frac12\|\hat{u}(\cdot,t)\|_{L^2}^2
&\le -\lambda_\alpha^\ast \varepsilon \,\|\hat{u}(\cdot,t)\|_{L^2}^2
   + \|\hat{u}(\cdot,t)\|_{L^2}\,\|d^{\lambda,h}(t,\cdot)\|_{L^2} \nonumber\\
&\quad + \|\hat{u}(\cdot,t)\|_{L^2}\,
\|R^h(t,\cdot)\|_{L^2},
\label{eq:energy-ineq-1}
\end{align}
where we also use the spectral property of $(-\Delta)^{\frac{\alpha}{2}}$ \cite{chen2005two}, i.e., 
        \begin{equation} \label{spectral property}
            \langle \hat{u}, (-\Delta)^{\frac{\alpha}{2}}\hat{u} \rangle \geq \lambda^*_\alpha \|\hat{u}\|_{L^2}^2. 
        \end{equation}
Finally, by \eqref{residual and defect}, we have
\begin{align}
\frac{\mathrm{d}}{\mathrm{d}t}\frac12\|\hat{u}(\cdot,t)\|_{L^2}^2
&\le -\lambda_\alpha^\ast \varepsilon \, \|\hat{u}(\cdot,t)\|_{L^2}^2
   + \|\hat{u}(\cdot,t)\|_{L^2}\,\delta^{\lambda,h}(t) \nonumber\\
&\quad + \|\hat{u}(\cdot,t)\|_{L^2}\,
\|R^h(t,\cdot)\|_{L^2},
\label{eq:energy-ineq-2}
\end{align}
        Using the Gr\"onwall inequality, we finish the proof,
        \begin{equation}
        \|\hat{u}(\cdot,t)\|_{L^2} \leq e^{-\lambda^*_\alpha \varepsilon t}\|\hat{u}(\cdot,0)\|_{L^2} + \int_0^t e^{-\lambda^*_\alpha \varepsilon (t-s)} \left( \delta^{\lambda,h}(s) +\|R^h(s,\cdot)\|_{L^2} \right) \,\mathrm{d}s.
        \end{equation}
    \end{proof}

    The first term on the right-hand side arises directly from the dissipative effect of \( (-\Delta)^{\alpha/2} \), whereas the second term accounts for the energy drift, which is driven by
    \begin{itemize}
        \item the computable projection defect $\delta^{\lambda,h}$,
        \item the spatial finite difference truncation error $\|R^h\|_{L^2}$.
    \end{itemize}
So it is instructive to consider the limiting regime \(\lambda\to 0\) and \(h\to 0\).
In this case, the spatial perturbation terms vanish and the regularized projection reduces to an exact \(L^2\) projection.
Consequently, one recovers a strict energy dissipation law, which serves as a baseline consistency check for our scheme and suggests that the drift observed for \(\lambda>0\) should be small when \(\lambda\) is sufficiently small and the grid is sufficiently refined.
However, to recover an exact energy dissipation identity, it is not sufficient to rely solely on the orthogonality of the projection.
One must additionally ensure that the numerical solution itself lies in the
range of the projection operator.
This requires a structural assumption on the parametrization $\Phi$, which we state next.
\begin{assumption}[Linear-in-parameters structure]
\label{assumption}
     There exists a subset of parameters $a \in \mathbb{R}^m$ (e.g. last-layer weights) with $m < N_p$ and remaining parameters $\theta$ such that
\begin{equation}
    \Phi(x;a,\theta) = \sum_{j=1}^m a_j \psi_j(x;\theta)
\end{equation}
with $\psi(·; \theta) \in L^2$ for each $\theta$. Then, for $q=(a,\theta)$ and all $t$, the numerical solution
$\hat{u}(\cdot,t)=\Phi(\cdot;q(t))$ belongs to the tangent space, i.e., \begin{equation}
\label{belong to tangent space}
    \hat{u}(·, t) \in T_q\mathcal{M}.
\end{equation}
Hence $P_{q(t)}\hat{u}(t) = \hat{u}(t)$ for the orthogonal (unregularized) projection $P_q := P^{\lambda = 0}_q$.
\end{assumption}

\begin{remark}
In fact, Assumption \ref{assumption} holds for a wide range of parametrizations \cite{zhang2024sequential}, including feed-forward deep neural networks, as long as the last layer is linear. Thus, it is reasonable to assume it in our theorem.
\end{remark}

This assumption ensures that, in the unregularized and continuum limit, i.e. $\lambda = 0$ and $h \to 0$, the projection operator acts trivially on the numerical solution.
As a result, the projected dynamics coincides with the full vector field when tested against $\hat{u}$.
This property is essential for recovering an exact energy dissipation identity, as shown in the following theorem.

    \begin{theorem}[Exact $L^2$ dissipation in the unregularized limit]
    Assume that the
parametrization $\Phi(\cdot;q)$ satisfies Assumption \ref{assumption}.
        Let $\hat{u}(x,t) = \Phi(x;q(t))$ be the parametric numerical solution of FBEFL (\ref{FBEFL2}). Consider the limiting regime $\lambda = 0$ and $h \to 0$
        and the projected dynamics
        \begin{equation}
        \label{unregularized dynamic}
            \hat{u}_t = P_{q(t)}f(t,x,\hat{u},D\hat{u}), \quad \text{with} \; f(t,x,\hat{u},D\hat{u}) = -\varepsilon(-\Delta)^{\frac{\alpha}{2}}\hat{u} - \hat{u}\hat{u}_x,
        \end{equation}
where $P_q$ is the $L^2$ projection onto $T_q \mathcal{M}$. If $\hat{u}(x,t) \in H_0^{\alpha/2}$, then 
    \begin{equation*}
        \|\hat{u}(\cdot,t)\|_{L^2} \leq e^{-\lambda^*_\alpha \varepsilon t}\|\hat{u}(\cdot,0)\|_{L^2}, 
    \end{equation*}
    \end{theorem}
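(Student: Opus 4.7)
The plan is to mimic the energy-identity calculation used in the proof of Theorem 5.3, but exploit Assumption 5.2 to remove the projection from the resulting inner product, so that no defect term remains and only the intrinsic dissipation survives. First I would set $E(t) := \tfrac12 \|\hat u(\cdot,t)\|_{L^2}^2$, differentiate in time, and substitute the projected dynamics \eqref{unregularized dynamic} to obtain
\begin{equation*}
\tfrac{d}{dt} E(t) \;=\; \langle \hat u, P_{q(t)} f(t,\cdot,\hat u, D\hat u)\rangle_{L^2}.
\end{equation*}
Since $\lambda=0$ and $h\to 0$, the relevant projector is the exact orthogonal $L^2$ projection onto $T_{q(t)}\mathcal{M}$, hence self-adjoint and idempotent.

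Next I would invoke Assumption 5.2: under the linear-in-parameters structure, $\hat u(\cdot,t) \in T_{q(t)}\mathcal{M}$, so $P_{q(t)} \hat u = \hat u$. By self-adjointness of $P_{q(t)}$, this gives the key identity
\begin{equation*}
\langle \hat u, P_{q(t)} f\rangle_{L^2}
= \langle P_{q(t)} \hat u, f\rangle_{L^2}
= \langle \hat u, f\rangle_{L^2}.
\end{equation*}
Thus the projection disappears when tested against $\hat u$, and the energy identity reduces to
\begin{equation*}
\tfrac{d}{dt} E(t) = -\varepsilon\,\langle \hat u, (-\Delta)^{\alpha/2}\hat u\rangle_{L^2} \;-\; \langle \hat u, \hat u\hat u_x\rangle_{L^2}.
\end{equation*}

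I would then dispatch the two terms separately. The nonlinear convective term vanishes under the homogeneous Dirichlet boundary conditions because $\hat u^2 \hat u_x = \tfrac13 (\hat u^3)_x$ is an exact derivative integrating to zero, exactly as used in the proof of Theorem 5.3. The fractional dissipative term is bounded below using the spectral estimate \eqref{spectral property}, i.e.\ $\langle \hat u, (-\Delta)^{\alpha/2}\hat u\rangle_{L^2} \geq \lambda^*_\alpha \|\hat u\|_{L^2}^2$, which is legitimate since the hypothesis $\hat u(\cdot,t)\in H_0^{\alpha/2}$ ensures compatibility with the Dirichlet realization of the fractional Laplacian. This yields the differential inequality
\begin{equation*}
\tfrac{d}{dt} E(t) \leq -2\varepsilon\lambda_\alpha^* E(t),
\end{equation*}
and a direct application of Gronwall's inequality gives the claimed exponential decay of $\|\hat u(\cdot,t)\|_{L^2}$.

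The main obstacle, conceptually, is step two: justifying why the projection can be dropped. In Theorem 5.3 the residual $\delta^{\lambda,h}$ and the truncation $R^h$ were necessary precisely because $P^\lambda_{q(t)}$ is a contraction strictly inside $L^2$ and the vector field is perturbed by $R^h$. Here both obstructions disappear in the limit $\lambda=0$, $h\to 0$, but the argument only closes because Assumption 5.2 guarantees $\hat u \in T_{q(t)}\mathcal{M}$; otherwise one could only conclude $\langle \hat u, P_{q(t)} f\rangle = \langle P_{q(t)} \hat u, f\rangle$ with $P_{q(t)}\hat u \neq \hat u$, reintroducing a residual. Making this structural point explicit is the crux of the proof; the remaining computations are routine and essentially the clean, error-free version of the estimates already carried out in Theorem 5.3.
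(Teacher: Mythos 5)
Your proposal is correct and follows essentially the same route as the paper's proof: the energy identity, the use of Assumption~\ref{assumption} together with self-adjointness of the $L^2$ projection to replace $\langle \hat u, P_{q(t)}f\rangle$ by $\langle \hat u, f\rangle$, vanishing of the convective term by integration by parts, the spectral bound \eqref{spectral property}, and Gr\"onwall. Your explicit remark that the argument closes only because $\hat u \in T_{q(t)}\mathcal{M}$ is precisely the structural point the paper's proof relies on.
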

    \begin{proof}
        The proof follows a standard energy argument; see, e.g.,
        \cite{zhang2024sequential}. Since $P_q$ is the $L^2$ projection 
        it is self-adjoint and satisfies
        \begin{equation}
        \label{self-adjoint}
        \langle P_q u, v \rangle_{L^2} = \langle u, P_q v \rangle_{L^2}.
        \end{equation}
        By the projected dynamics \eqref{unregularized dynamic}, we have
        \begin{align}
        \frac{\mathrm{d}}{\mathrm{d}t}\frac{1}{2}\|\hat{u}(\cdot,t)\|_{L^2}^2 & = \langle \hat{u}(\cdot,t), \partial_t \hat{u}(\cdot,t) \rangle \nonumber\\
            & = \langle\hat{u}, P_{q(t)}[-\varepsilon(-\Delta)^{\frac{\alpha}{2}}\hat{u} - \hat{u}\hat{u}_x] \rangle
        \end{align}
        By \eqref{belong to tangent space} and the self-adjointness \eqref{self-adjoint}, we obtain
        \begin{align}
            \frac{\mathrm{d}}{\mathrm{d}t}\frac{1}{2}\|\hat{u}(\cdot,t)\|_{L^2}^2 & = \langle P_{q(t)}\hat{u}, -\varepsilon(-\Delta)^{\frac{\alpha}{2}}\hat{u} - \hat{u}\hat{u}_x \rangle \nonumber \\
            & = \langle \hat{u}, -\varepsilon(-\Delta)^{\frac{\alpha}{2}}\hat{u} - \hat{u}\hat{u}_x \rangle,
        \end{align}
         Using integration by parts and \eqref{spectral property}, we can obtain the result. 
    \end{proof}

Before proceeding to the error analysis, we summarize the roles of the two key quantities in our framework: the least-squares residual \(\delta^{\lambda,h}\) and the finite-difference truncation error \(R^h\).

In the stability estimate above, these quantities act as perturbation sources that quantify the deviation of the numerical flow from the exact dissipative dynamics.
Importantly, \(\delta^{\lambda,h}\) is fully computable from the iterates at each step, while \(R^h\) admits a known convergence behavior as \(h\) is refined.
Therefore, they provide practical {\it a posteriori} indicators for diagnosing energy drift and potential loss of stability.

In the next subsection, we show that the same pair \((\delta^{\lambda,h}, R^h)\) also enters the evolution of the numerical error and leads to an {\it a posteriori} \(L^2\)-error bound.
This yields a unified perspective: these quantities not only explain the energy drift in the stability analysis, but also control the growth of the numerical error.

\subsection{Error estimate}
In our algorithm, the vector field is approximated in two steps. First, we use a finite difference scheme to approximate the fractional derivative, denoted by \( f^h \), which approximates the original \( f \). Then, we project \( f^h \) onto the tangent space \( T_q \mathcal{M} \). In our error estimate, we account for the errors introduced in both of these steps. Based on this approach, the following theorem provides an error estimate for the numerical solution.

Let the exact solution of FBEFL \eqref{FBEFL2} satisfy 
\begin{equation}
    \label{exact evolution}
    u_t = f(t, x, u, Du),    
\end{equation} and the corresponding numerical solution satisfy $\hat{u}_t = P^\lambda_{q(t)} f_h$ with $f_h = f + R^h$ and $\|d^{\lambda,h}(t)\|_{L^2} \leq \delta^{\lambda,h}(t)$ as in \eqref{residual and defect} and \eqref{evolution equation}.
\begin{theorem}[Error estimate] \label{error estimate}
     Assume that $\exists M >0$, such that $\|\hat{u}(\cdot,t)\|_{W^{1,\infty}} \leq M$ and $\|u(\cdot,t)\|_{W^{1,\infty}} \leq M$. If $\hat{u}(x,t) \in H_0^{\alpha/2}$, then it holds
    \begin{align*}
        \|u(\cdot,t) - \hat{u}(\cdot,t)\|_{L^2} & \leq e^{(\frac{3}{2}M - \lambda_\alpha^* \varepsilon )t}\|u(\cdot,0) - \hat{u}(\cdot, 0)\|_{L^2} + \int_0^t e^{(\frac{3}{2}M - \lambda^*_\alpha \varepsilon)(t-s)}\delta^{\lambda,h}(s) \,\mathrm{d}s \nonumber\\
        & \quad + \int_0^t e^{(\frac{3}{2}M - \lambda^*_\alpha \varepsilon)(t-s)} \|R^h(s,\cdot)\|_{L^2} \,\mathrm{d}s.
    \end{align*}
\end{theorem}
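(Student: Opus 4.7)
The plan is to derive an energy-type differential inequality for the squared $L^2$ error $e(t) := u(\cdot,t) - \hat u(\cdot,t)$ and then close it using Grönwall's lemma. The argument parallels the stability proof of Theorem~\ref{stability estimate}, but the Burgers nonlinearity now depends on both $u$ and $\hat u$, and must be handled carefully to produce the stated $\tfrac{3M}{2}$ coefficient.

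First I would write down the error equation. Combining the exact dynamics \eqref{exact evolution} with the projected dynamics $\hat u_t = P^\lambda_{q(t)} f^h = f^h - d^{\lambda,h}$ (from \eqref{evolution equation}) and the truncation identity \eqref{finite difference error}, I obtain
\begin{equation*}
e_t = f(t,\cdot,u,Du) - f(t,\cdot,\hat u, D\hat u) + R^h + d^{\lambda,h}.
\end{equation*}
Testing in $L^2$ and using $f(v,Dv) = -\varepsilon(-\Delta)^{\alpha/2}v - v v_x$ gives
\begin{equation*}
\frac{\mathrm d}{\mathrm d t}\tfrac12\|e\|_{L^2}^2 = -\varepsilon\langle e, (-\Delta)^{\alpha/2} e\rangle - \langle e, uu_x - \hat u \hat u_x\rangle + \langle e, R^h\rangle + \langle e, d^{\lambda,h}\rangle.
\end{equation*}
The fractional-Laplacian term is absorbed via the spectral inequality \eqref{spectral property} to produce the dissipation $-\lambda_\alpha^\ast \varepsilon \|e\|_{L^2}^2$; the truncation and defect terms are handled by Cauchy--Schwarz together with \eqref{residual and defect}, contributing $\|e\|_{L^2}\bigl(\|R^h\|_{L^2} + \delta^{\lambda,h}\bigr)$.

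The main obstacle is the nonlinear piece, which is where the $\tfrac{3M}{2}$ enters. I would use the algebraic identity
\begin{equation*}
u u_x - \hat u \hat u_x \;=\; u\,e_x + \hat u_x \,e,
\end{equation*}
split the inner product, and integrate by parts on the first summand. Under homogeneous Dirichlet boundary conditions (inherited from $\hat u \in H_0^{\alpha/2}$ and the analogous regularity of $u$) the boundary contributions vanish and $\langle e, u e_x\rangle = -\tfrac12 \langle u_x, e^2\rangle$, giving $|\langle e, u e_x\rangle| \le \tfrac{M}{2}\|e\|_{L^2}^2$ by $\|u\|_{W^{1,\infty}} \le M$. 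The remaining term is bounded directly by $|\langle e, \hat u_x e\rangle| \le M\|e\|_{L^2}^2$. Summing yields the desired $\tfrac{3M}{2}\|e\|_{L^2}^2$ upper bound for the nonlinear contribution.

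Assembling everything gives
\begin{equation*}
\frac{\mathrm d}{\mathrm d t}\tfrac12\|e\|_{L^2}^2 \le \bigl(\tfrac{3M}{2} - \lambda_\alpha^\ast \varepsilon\bigr) \|e\|_{L^2}^2 + \|e\|_{L^2}\bigl(\delta^{\lambda,h}(t) + \|R^h(t,\cdot)\|_{L^2}\bigr),
\end{equation*}
which, after dividing by $\|e\|_{L^2}$ (handling possible zeros by a standard $\sqrt{\|e\|_{L^2}^2 + \eta^2}$ regularization and letting $\eta \to 0$), reduces to a linear scalar inequality in $\|e\|_{L^2}(t)$. The integral form of Grönwall's lemma then delivers the stated bound. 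The delicate step is entirely the splitting of $uu_x - \hat u \hat u_x$: a naïve Lipschitz estimate in $W^{1,\infty}$ would either lose the sharp constant or demand higher regularity, whereas the cancellation used here relies on the quadratic structure of the Burgers flux combined with the same Dirichlet condition already exploited in Theorem~\ref{stability estimate}.
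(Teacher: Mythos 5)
Your proposal is correct and follows essentially the same route as the paper's proof: the same energy identity for $\|u-\hat u\|_{L^2}^2$, the same three-way decomposition into the continuous flux difference, the truncation error $R^h$, and the projection defect bounded by $\delta^{\lambda,h}$, the same $\tfrac{3M}{2}$ constant obtained by splitting the quadratic flux difference and integrating by parts on one piece, and the same Gr\"onwall closure. The only cosmetic difference is that you split $uu_x-\hat u\hat u_x = u\,e_x + \hat u_x\,e$ while the paper uses the symmetric split with the roles of $u$ and $\hat u$ exchanged, which is immaterial since both gradients are bounded by the same $M$.
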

\begin{proof}
By \eqref{evolution equation} and \eqref{exact evolution}, we obtain
    \begin{align}
        \frac{1}{2}\frac{\mathrm{d}}{\mathrm{d}t}\|u - \hat{u}\|_{L^2}^2 & = \langle u - \hat{u}, \partial_t u - \partial_t \hat{u} \rangle \nonumber\\
        & = \langle u - \hat{u}, f(t,\cdot,u,Du) - P^\lambda_{q(t)} f^h(t,\cdot,\hat{u},D\hat{u}^h) \rangle \nonumber\\
        & = \langle u - \hat{u},f(t,\cdot,u,Du) - f(t,\cdot,\hat{u},D\hat{u}) \rangle \nonumber\\
        & \quad + \langle u - \hat{u}, f(t,\cdot,\hat{u},D\hat{u}) - f^h(t,\cdot,\hat{u},D\hat{u}) \rangle \nonumber\\
        & \quad + \langle u - \hat{u},f^h(t,\cdot,\hat{u},D\hat{u}) - P^\lambda_{q(t)} f^h(t,\cdot,\hat{u},D\hat{u}^h) \rangle,
    \end{align}
    where we use the add-and-subtract technique in the last equality. We will estimate these three terms respectively. The first term is estimated by standard integration by parts,
    \begin{align}
        \langle u - \hat{u},f(t,\cdot,u,Du) - f(t,\cdot,\hat{u},D\hat{u}) \rangle & = \langle\hat{u} - u , \hat{u}\hat{u}_x + \varepsilon (-\Delta)^{\frac{\alpha}{2}}\hat{u} - uu_x - \varepsilon (-\Delta)^{\frac{\alpha}{2}}u \rangle \nonumber\\
        & \leq - \lambda^*_\alpha \varepsilon \|u - \hat{u}\|_{L^2}^2 + |\langle\hat{u} - u ,\hat{u}\hat{u}_x - uu_x \rangle|.
        \label{com 1}
    \end{align}
    By assumption, $\|\hat{u}(\cdot,t)\|_{W^{1,\infty}}$ and $\|{u}(\cdot,t)\|_{W^{1,\infty}}$ are uniformly bounded by $M$,
    \begin{align}
       |\langle\hat{u} - u, \hat{u}\hat{u}_x - uu_x \rangle| & \leq |\langle\hat{u} - u, \hat{u}\hat{u}_x - \hat{u}u_x \rangle| +  |\langle\hat{u} - u, \hat{u}u_x - uu_x \rangle| \nonumber \\
       & = |\langle \frac{1}{2}(\hat{u} - u)^2, \hat{u}_x \rangle| + |\langle (\hat{u} - u)^2, u_x \rangle| \nonumber \\
       & \leq (\frac{1}{2}\|\hat{u}_x\|_{L^\infty} + \|u_x\|_{L^\infty}) \|\hat{u} - u\|_{L^2}^2 \nonumber \\
       & \leq \frac{3}{2}M\|\hat{u} - u\|_{L^2}^2. \label{com 2}
    \end{align}
    By definition \eqref{defect function}, \eqref{finite difference error}, inequality \eqref{residual and defect} and Cauchy-Schwartz inequality, the second and third term are estimated by
    \begin{equation}
    \label{com 3}
        |\langle u - \hat{u}, f(t,\cdot,\hat{u},D\hat{u}) - f^h(t,\cdot,\hat{u},D\hat{u}) \rangle| \leq \|u - \hat{u}\|_{L^2} \|R^h\|_{L^2},
    \end{equation}
    and
    \begin{equation}
    \label{com 4}
        |\langle u - \hat{u},f^h(t,\cdot,\hat{u},D\hat{u}) - P^\lambda_{q(t)} f^h(t,\cdot,\hat{u},D\hat{u}^h) \rangle| \leq \|u - \hat{u}\|_{L^2} \delta^{\lambda,h}(t).
    \end{equation}
    Combining \eqref{com 1}, \eqref{com 2}, \eqref{com 3}, \eqref{com 4}, and applying Gr\"onwall inequality, we derive the final error estimate,
    \begin{align}
        \|u(\cdot,t) - \hat{u}(\cdot,t)\|_{L^2} & \leq e^{(\frac{3}{2}M - \lambda_\alpha^* \varepsilon )t}\|u(\cdot,0) - \hat{u}(\cdot, 0)\|_{L^2} + \int_0^t e^{(\frac{3}{2}M - \lambda^*_\alpha \varepsilon)(t-s)}\delta^{\lambda,h}(s) \,\mathrm{d}s \nonumber\\
        & \quad + \int_0^t e^{(\frac{3}{2}M - \lambda^*_\alpha \varepsilon)(t-s)} \|R^h(s,\cdot)\|_{L^2} \,\mathrm{d}s.
    \end{align}
\end{proof}
The above theorem implies that the error can be decomposed into three components:
\begin{itemize}
    \item Initial condition fitting error \( \|u(\cdot,0) - \hat{u}(\cdot, 0)\|_{L^2} \), which can be reduced by increasing the number of parameters or the number of optimization iterations.
    \item Least-squares residual \( \delta^{\lambda,h}(s) \), which depends on how well the tangent space \( T_q{\mathcal{M}} \) approximates the right-hand side. If stability is not a concern, reducing the regularization term can also help minimize this error.
    \item Truncation error of finite difference approximation \( R^h(t,x) \), which exhibits a clear convergence behavior with $h \to 0$ and can be reduced by using higher-order schemes or employing a finer grid.
\end{itemize}
Our experiments in the next section indicate that once the initial condition fit is sufficiently accurate and the finite difference error dominates, both increasing the order of the finite difference scheme and refining the grid help reduce the overall error. However, as soon as the least-squares residual dominates, the error levels off, and the model's fitting capacity becomes saturated.

\section{Numerical experiments}\label{section6}
In this section, both of the two types of fractional Burgers equations are solved by our method to demonstrate its capability and accuracy. To highlight the advantages of our method over conventional finite difference method and spectral method, we also present comparisons of results using different methods. In addition, to validate the preceding theoretical analysis, we further investigate the relationship between the error of numerical solution and the truncation error of the finite difference approximation, to confirm the correctness of the theoretical results.

In all the following tests, we adopt the multilayer perceptron as the nonlinear approximation,  and we use $\tanh$ as activation function. The initial parameters are initialized using the Xavier initialization scheme. The initial condition is satisfied by minimizing \eqref{Loss}, with the L-BFGS optimizer. The specific boundary conditions are enforced into the structure through the design of the network \cite{du2021evolutional}. The implementation of this approach will be illustrated in the specific examples. 
All computations are performed on a standalone NVIDIA GPU RTX 6000.

To compare the performance between different methods, we will use the relative
$L^2$ error defined as
\begin{equation}
    e = \sqrt{\frac{\sum_{i=1}^{N_t} |u(t_i,x_i) - \hat{u}(t_i,x_i)|^2}{\sum_{i=1}^{N_t}|u(t_i,x_i)|^2 }},
\end{equation}
where $N_t$ is the number of test points in the spatial and temporal domain, and $(t_i,x_i)$ denote the $i$-th test point. 

\subsection{FBEFL} In this part, we solve \eqref{FBEFL2} using STNP.
We present two numerical experiments. In the first experiment, we validate the correctness of STNP by comparing it with a high-resolution shock-capturing reference solver (WENO--Roe) and a baseline central-difference discretization. In the second experiment, we examine the convergence behavior of STNP with respect to the order of the finite difference approximation for the fractional Laplacian.

We first consider \eqref{FBEFL2} on \([-1,1]\) with the initial and boundary conditions
\begin{equation}
u_0(x) = -\sin(\pi x), \qquad u(\pm 1,t)=0.
\end{equation}
To highlight the capability of STNP to suppress spurious oscillations, we choose a small viscosity coefficient \(\varepsilon=0.01\) and fix the fractional order to \(\alpha=1.6\).
For STNP, we use a fully connected neural network with architecture \([1,10,10,1]\), i.e., two hidden layers of width \(10\), resulting in \(141\) trainable parameters, and employ \(300\) uniformly distributed collocation points in space.
As a reference solver, we implement the WENO--Roe scheme following \cite{cockburn1998essentially} using \(201\) degrees of freedom.
Both methods use the third-order strong-stability-preserving Runge--Kutta integrator (SSP-RK3) \cite{shu1988efficient} with time step \(\Delta t=10^{-3}\).
We plot the predicted solutions at \(t=0.5\) and \(t=1.0\) and compare STNP with the central-difference and WENO--Roe schemes in Figure~\ref{FBEFL_comparison}.

\begin{figure}[htbp]
    \centering
    \begin{overpic}[width=0.45\textwidth]{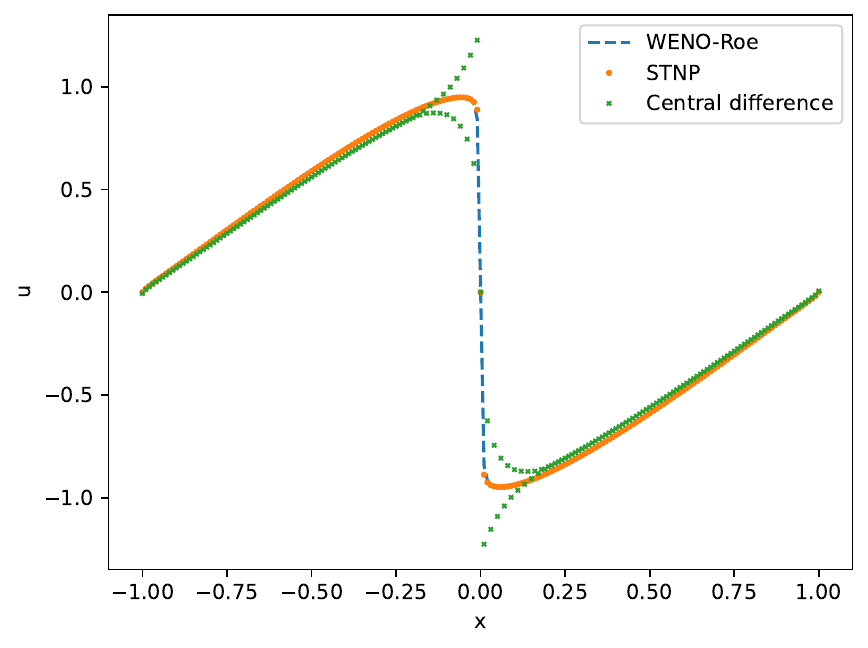}
    \end{overpic}\hfill
    \begin{overpic}[width=0.45\textwidth]{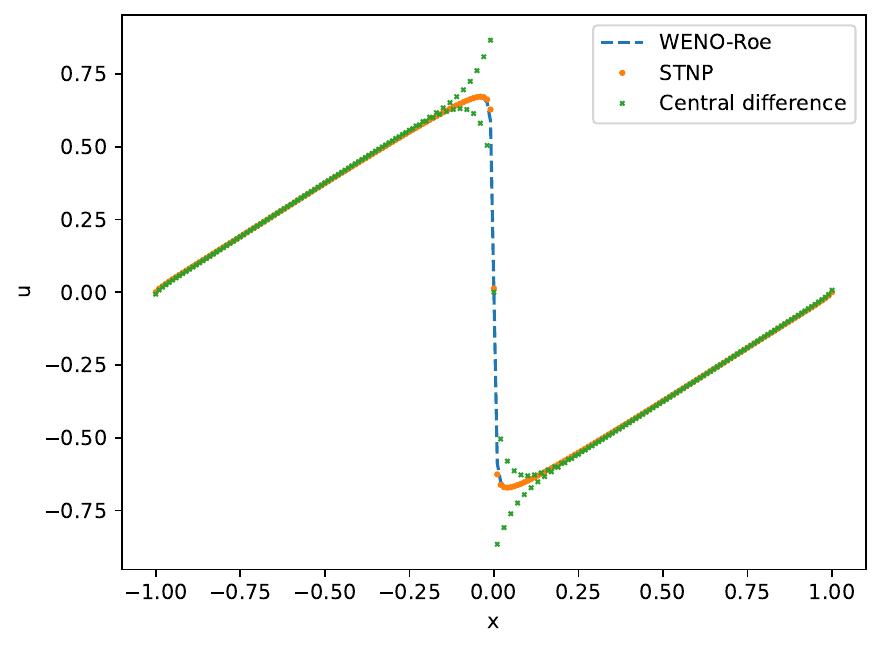}
    \end{overpic}
    \caption{Comparison of numerical solutions computed by STNP, WENO--Roe, and central-difference schemes. Both STNP and WENO--Roe suppress spurious oscillations, whereas the central-difference scheme exhibits pronounced oscillations near the shock.}
    \label{FBEFL_comparison}
\end{figure}

Under these settings, the central-difference baseline develops pronounced numerical oscillations near the shock due to insufficient numerical dissipation.
In contrast, STNP produces non-oscillatory solutions that closely match the WENO--Roe results, while avoiding the nonlinear reconstruction procedures required by WENO-type methods.
This comparison confirms both the accuracy of STNP and its robustness in the low-viscosity regime.

Next, we assess the convergence behavior of STNP as the grid is refined and as the order of the finite-difference approximation for the fractional Laplacian increases.
To isolate truncation errors from other sources, we adopt a manufactured smooth solution,
\[
u(x,t)=e^{-t}x^3(1-x)^3,\qquad x\in[0,1],
\]
and set \(\varepsilon=1.0\) so that the solution remains sufficiently regular.
The corresponding forced problem takes the form
\begin{equation}
u_t + \frac{1}{2}(u^2)_x = -\varepsilon (-\Delta)^{\frac{\alpha}{2}} u + f(x,t),
\end{equation}
where the forcing term \(f(x,t)\) is chosen so that the exact solution is given by the expression above:
\begin{align*}
f(x,t) & = -e^{-t}(x^3(1-x)^3)+e^{-2t}(3-6x)x^5(1-x)^5 \\
& \quad + \frac{\varepsilon}{2\cos(\frac{\alpha\pi}{2})} \Big( \frac{\Gamma(4)}{\Gamma(4-\alpha)}\left( x^{3-\alpha} +(1-x)^{3-\alpha} \right) \\
& \qquad\qquad -\frac{3\Gamma(5)}{\Gamma(5-\alpha)}\left( x^{4-\alpha} +(1-x)^{4-\alpha} \right) \\
& \qquad\qquad +\frac{3\Gamma(6)}{\Gamma(6-\alpha)}\left( x^{5-\alpha} +(1-x)^{5-\alpha} \right) \\
& \qquad\qquad - \frac{\Gamma(7)}{\Gamma(7-\alpha)}\left( x^{6-\alpha} +(1-x)^{6-\alpha} \right)\Big).
\end{align*}
We use RK3 with \(\Delta t=10^{-3}\).
We approximate the fractional Laplacian using different order finite-difference formulas and refine the grid by increasing the number of collocation points
\[
N\in\{5,10,20,40,50,100,200,400,800,1000\}.
\]
To avoid having the network approximation error dominate the results, we keep the network width fixed at \(10\) and increase the depth to \(3\), \(5\), and \(7\). This choice reduces the initial-condition misfit and the residual error so that the observed convergence is primarily driven by the truncation error of the finite-difference approximation. The relative \(L^2\) errors are reported in Figure~\ref{truncation_error}.%

\begin{figure}[htbp]
    \centering
    \begin{overpic}[width=0.31\textwidth]{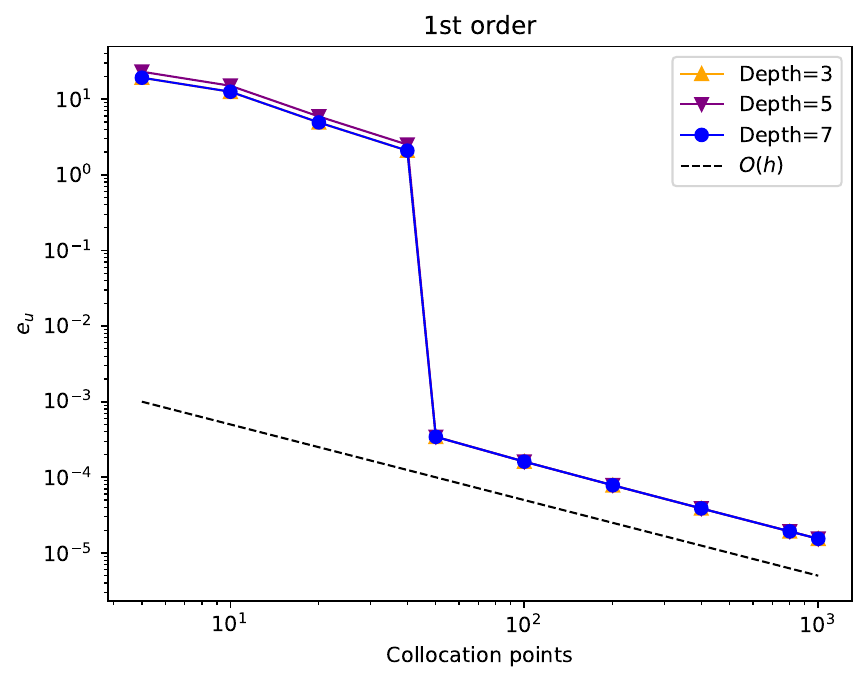}
    \end{overpic}\hfill
    \begin{overpic}[width=0.31\textwidth]{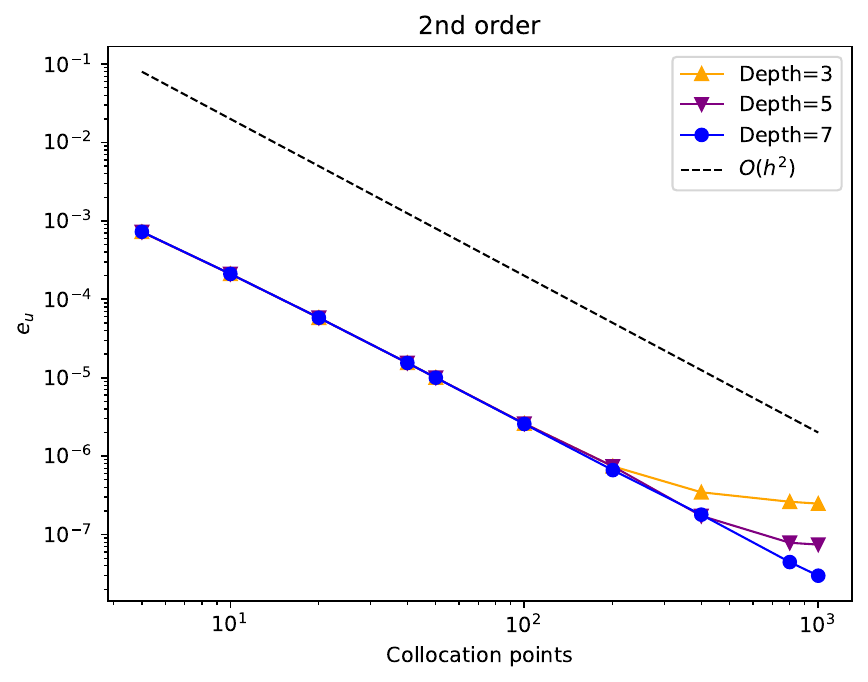}
    \end{overpic}\hfill
    \begin{overpic}[width=0.31\textwidth]{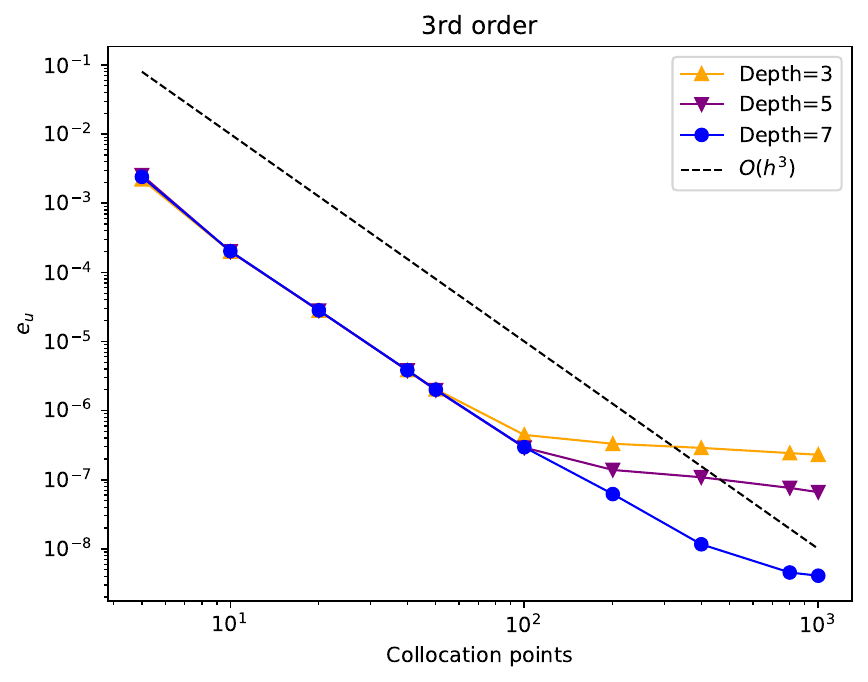}
    \end{overpic}
    \caption{Relative \(L^2\) error versus the number of collocation points \(N\) using first-order (left), second-order (middle), and third-order (right) finite-difference approximations of the fractional Laplacian, for networks of fixed width \(10\) and depths 3, 5, and 7.}
    \label{truncation_error}
\end{figure}

As expected, when \(N\) is small the overall error is dominated by the truncation error, and curves corresponding to different network depths nearly overlap.
As \(N\) increases, the truncation error decreases and the benefit of richer network parametrizations becomes visible: deeper networks reduce the initial-condition misfit and the residual error, leading to smaller final errors and revealing the anticipated convergence trend for each finite-difference order.

\subsection{FBENN} 
In this section, we solve \eqref{FBENN3} using STNP. The initial condition is
\begin{equation}
u_0(x) = -\sin(\pi x),
\end{equation}
and the boundary condition is prescribed by the exact solution obtained from \eqref{exact solution}. The diffusion coefficient is fixed to \(\varepsilon=\frac{1}{150\pi}\).
We perform three sets of tests. We first benchmark STNP against a spectral method with spectral vanishing viscosity (SVV) \cite{mao2017fractional} to verify accuracy and shock-capturing performance. We then investigate the influence of time integration by comparing a fixed-step scheme (RK3) with an adaptive scheme (RK45). Finally, we examine the long-time behavior of STNP and its sensitivity to random initialization.

We first solve \eqref{FBENN3} using STNP and the spectral-SVV method. For STNP, we use a fully connected network with architecture \([1,10,10,1]\). For the spectral method, we employ Jacobi polynomials of degree \(128\). Both solvers use the third-order Runge--Kutta method (RK3) with time step \(\Delta t=10^{-3}\). Figure~\ref{FBENN_comparison} compares the numerical and exact solutions at \(t=0.5\) and \(t=1.0\) for \(\beta=0.6\) and \(\beta=0.8\), respectively. The spectral method requires a relatively large SVV to suppress the Gibbs phenomenon, which inevitably smears discontinuities and reduces accuracy near shocks. In contrast, STNP resolves the shock location more sharply and matches the exact solution more closely, leading to visibly improved accuracy.

\begin{figure}[htbp]
    \centering
    \begin{overpic}[width = 0.45\textwidth]{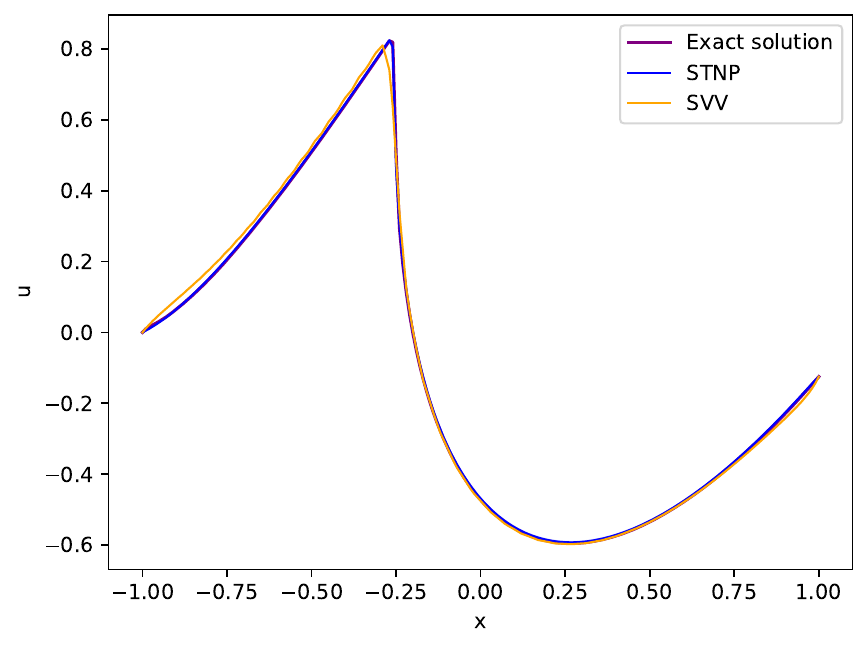}
    \end{overpic}\hfill
    \begin{overpic}[width = 0.45\textwidth]{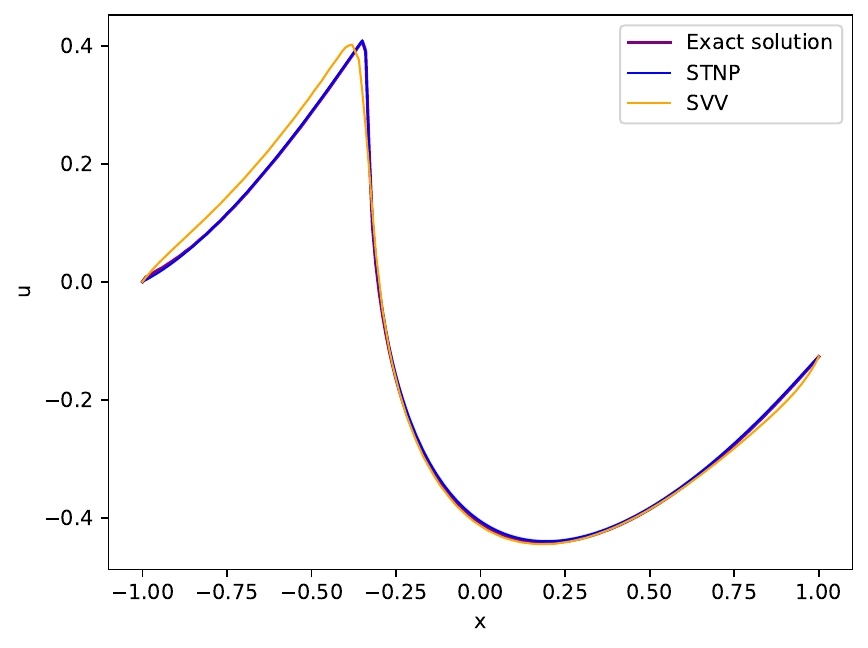}
    \end{overpic}

    \begin{overpic}[width = 0.45\textwidth]{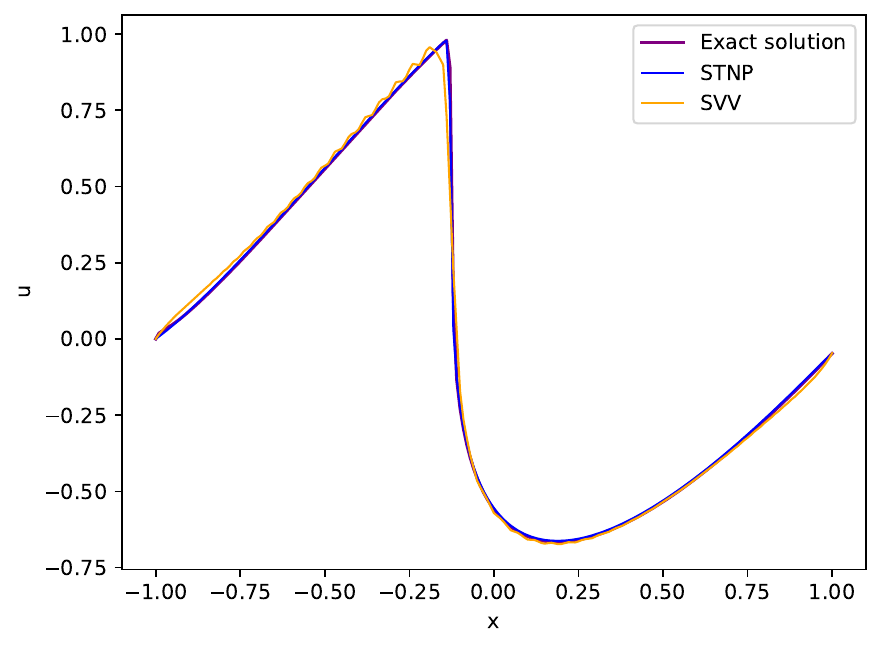}
    \end{overpic}\hfill
    \begin{overpic}[width = 0.45\textwidth]{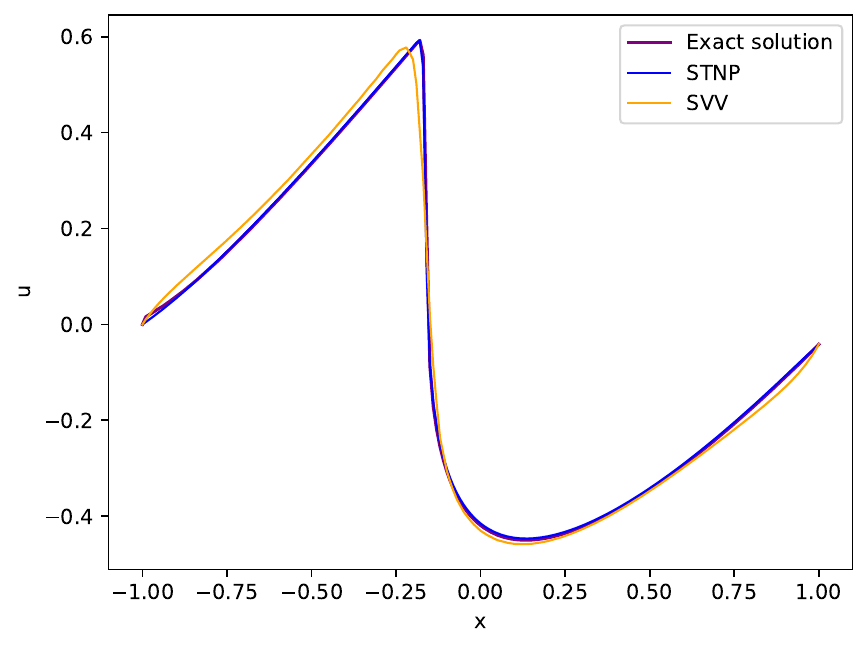}
    \end{overpic}
    \caption{Comparison of spectral-SVV, STNP, and exact solutions. The first column shows \(t=0.5\), the second column shows \(t=1.0\); the top row corresponds to \(\beta=0.6\) and the bottom row to \(\beta=0.8\). Both methods suppress spurious oscillations, but STNP achieves higher accuracy near the shock.}
    \label{FBENN_comparison}
\end{figure}

Next, we assess the impact of time integration on the STNP dynamics. We consider \(\beta=0.8\) and compare RK3 with an adaptive Runge--Kutta method (RK45), which adjusts the step size according to an embedded local truncation error estimate. As shown in the middle panel of Figure~\ref{Long_time_behavior}, RK45 achieves slightly smaller relative \(L^2\) error than fixed-step RK3. More importantly, RK45 provides more robust trajectories because it automatically reduces the step size when the solution  changes rapidly, which occurs around \(t\approx 0.3\) in this example. The right panel of Figure~\ref{Long_time_behavior} shows the cumulative time reached at each integration step and illustrates this adaptive step-size reduction. Notably, RK45 uses about \(451\) time steps on average, whereas RK3 always uses \(1000\), indicating that adaptive time stepping can improve both robustness and efficiency in our framework.

Finally, we investigate the long-time performance of STNP. We integrate the system up to \(t=5\) and repeat the experiment with ten independent random initializations. The resulting \(1\sigma\) confidence bands are narrow, indicating negligible sensitivity to initialization. The left panel of Figure~\ref{Long_time_behavior} reports the temporal evolution of the relative \(L^2\) error for STNP and spectral-SVV, showing that STNP maintains uniformly higher accuracy throughout the entire time horizon.

\begin{figure}[htbp]
    \centering
    \begin{overpic}[width=0.31\textwidth]{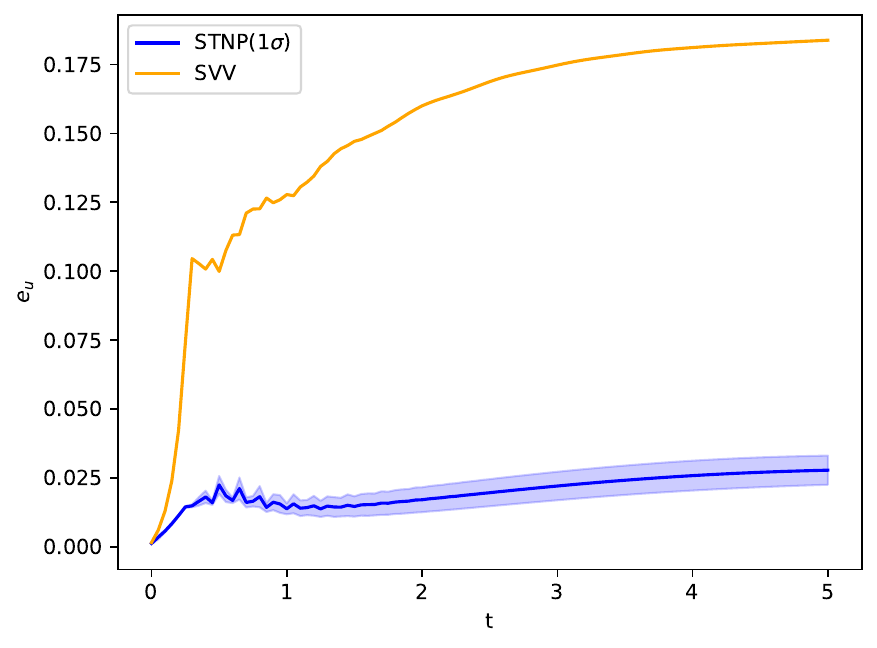}
    \end{overpic}\hfill
    \begin{overpic}[width=0.31\textwidth]{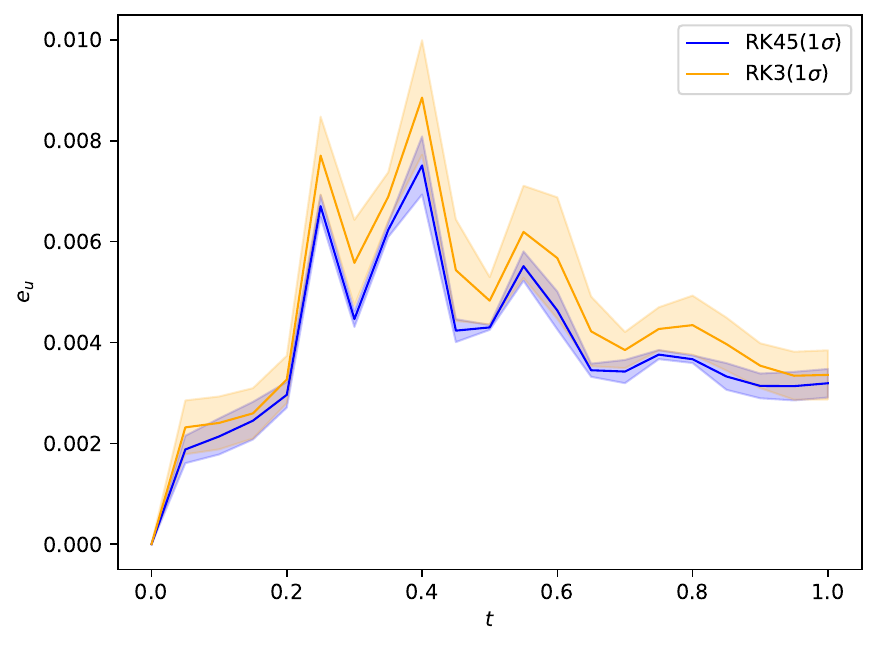}
    \end{overpic}\hfill
    \begin{overpic}[width=0.33\textwidth]{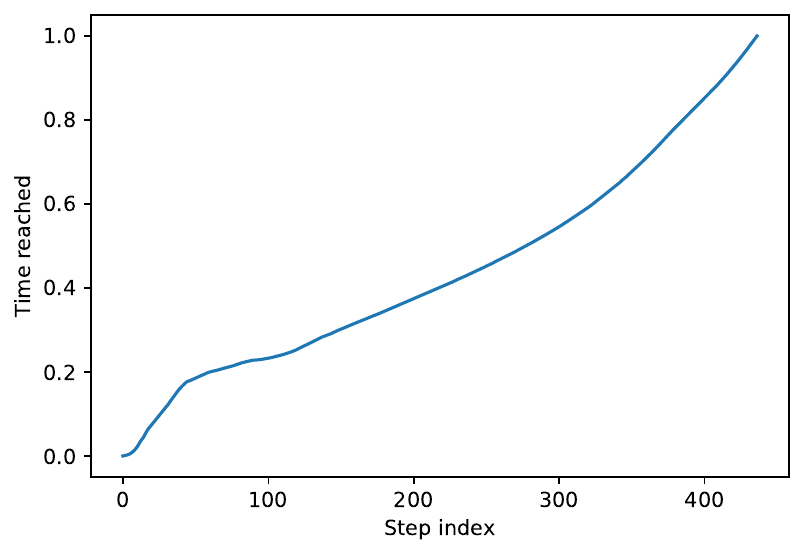}
    \end{overpic}
    \caption{Left: temporal evolution of the relative \(L^2\) error for STNP and spectral-SVV; STNP remains more accurate over the entire time horizon. Middle: relative \(L^2\) error of RK3 versus RK45, showing improved robustness with adaptive time stepping. Right: cumulative time reached at each integration step; RK45 automatically reduces the step size around \(t\approx 0.3\).}
    \label{Long_time_behavior}
\end{figure}

In summary, STNP consistently produces accurate, non-oscillatory solutions and outperforms the spectral-SVV baseline, especially near shocks. Moreover, it remains stable under long-time integration and shows negligible sensitivity to random initialization, indicating strong robustness in practice.

\section{Conclusion}
In this work, we introduced a sequential-in-time nonlinear parametrization (STNP) for fractional Burgers equations and studied its stability and accuracy. The method projects the PDE dynamics onto the tangent space of the neural manifold and computes the parameter evolution by solving a regularized least-squares problem at each step. This design yields a well-posed update and enables accurate tracking of the solution dynamics. We established stability and convergence estimates for both FBEFL and FBENN under the resulting projected dynamics, and the numerical results are consistent with the theory. In particular, STNP captures shocks without spurious oscillations, matches or outperforms WENO--Roe and spectral-SVV baselines with comparable parameter counts, and achieves \(10^{-8}\) relative error on smooth manufactured solutions while exhibiting the predicted convergence rates. Moreover, adaptive RK45 time integration improves robustness by automatically reducing step sizes near rapid transients in the parameter flow.

Several aspects of the framework are essential for future work. For inviscid hyperbolic conservation laws, adaptive collocation and residual-driven refinement near shocks appear crucial for accurately resolving the parameter dynamics. In addition, incorporating conservation or entropy constraints directly into the tangent-space projection remains an open direction. We expect these developments to further establish nonlinear parametrization as a practical tool for nonlocal, transport-dominated PDEs.

\section*{Acknowledgments}

This work was supported by the MURI grant (FA9550-20-1-0358), and the U.S. Department of Energy, Advanced Scientific Computing Research program, under the SEA-CROGS project, (DE-SC0023191).
Additional funding was provided by GPU Cluster for Neural PDEs and Neural Operators to support MURI Research and Beyond, under Award \#FA9550-23-1-0671.

\bibliographystyle{siamplain}
\bibliography{hyperref}

\end{document}